\documentclass[11pt]{article}
\usepackage{amsmath,amscd}
\usepackage{amssymb,latexsym,amsthm}
\usepackage{color,palatino}
\usepackage[utf8]{inputenc}
\usepackage[T1]{fontenc}
\usepackage{csquotes}
\usepackage{latexsym}
\usepackage{multirow}
\usepackage{lscape}
\usepackage{enumerate}
\usepackage{longtable}
\usepackage{array}
\usepackage{amsmath}
\usepackage{amssymb}
\usepackage{fancyhdr}
\usepackage{pb-diagram}
\usepackage{amsfonts}
\usepackage{hyperref}
\usepackage{url}
\hypersetup{
  colorlinks=true,
  linkcolor=blue,
  citecolor=black,
  urlcolor=cyan
}
\usepackage{pb-diagram}
\newtheorem{theorem}{Theorem}[section]
\newtheorem{proposition}[theorem]{Proposition}
\newtheorem{lemma}[theorem]{Lemma}

\newtheorem{corollary}[theorem]{Corollary}

\newtheorem{definition}[theorem]{Definition}

\newtheorem*{question*}{Question}
\theoremstyle{definition}
\newtheorem{remark}[theorem]{Remark}

\newcommand{\K}{\Bbbk}

\DeclareMathOperator{\GKdim}{GKdim}

\makeatletter
\def\@roman#1{\romannumeral #1}
\makeatother

\title{\textbf{The ozone group of $U_q^+(B_2)$}}
\author{James Gómez and Helbert Venegas}

\date{}
\begin{document}
\makeatletter
\def\@roman#1{\romannumeral #1}
\makeatother
\maketitle
\begin{abstract}
\noindent
We determine the ozone group of \(U_q^{+}(B_2)\). More precisely, if
\(\ell=\operatorname{ord}(q^2)\) when \(q\) is a root of unity, we obtain
$$
\operatorname{Oz}(U_q^{+}(B_2))
\cong
\begin{cases}
\mu_2 , & \text{if \(q\) is not a root of unity},\\[2mm]
\mu_{\gcd(\ell,2)}, &
\text{if \(q\) is a primitive \(m\)-th root of unity, \(m\geq5\).}
\end{cases}
$$
We also study several homological properties of \(U_q^{+}(B_2)\), showing
that it is Artin--Schelter regular of global dimension \(4\), Auslander-regular, 
Cohen--Macaulay, strongly Noetherian, and skew Calabi--Yau. Finally, in
the root-of-unity case, we relate the ozone group to the normal elements
of \(U_q^{+}(B_2)\) and show that, when \(\ell\) is odd,
\(U_q^{+}(B_2)\) is Calabi--Yau and its center is Gorenstein.
 
\bigskip

\noindent
\textit{Keywords: Quantum algebra; PI algebras; center; ozone groups.} 
\bigskip

\noindent 2020 \textit{Mathematics Subject Classification.} Primary: 16W20.
Secondary: 17B37, 16U70.
\end{abstract}
\section{Introduction}
The algebra \(U_q^{+}(B_2)\)  was originally defined within the general framework of quantum enveloping algebras developed by Drinfeld \cite{Drinfeld1986} and Jimbo \cite{Jimbo1985} in the 1980s, as part of the theory of quantum groups arising from the study of quantum integrable systems and the Yang-Baxter equation.
Its significance lies in the fact that it constitutes a fundamental example of a rank-$2$ quantum algebra (with two simple roots), associated to the complex simple Lie algebra $\mathfrak{g}$  of type $B_2$. In particular $U_q^{+}(B_2)$ is the positive part of the quantum group $U_q(\mathfrak{g})$, generated by the Chevalley generators $E_1$, $E_2$ corresponding to the simple roots of the root system of type $B_2$. For the case where \(q\) is not a root of unity, this algebra has been studied in detail by Launois \cite{Launois2007}, who explicitly described its prime and primitive spectra, computed its automorphism group, and obtained quantum analogues of the Weyl algebra as simple quotients. Additionally, in \cite{AndruskiewitschDumas2008} some results related to Hopf algebra automorphism are also developed. 
Recently, some properties have been studied in the case where the parameter $q$ is a root of unity. In particular, Bera and Mukherjee \cite{BeraMukherjee2026UqB2} proved that $U_q^{+}(B_2)$ is a PI algebra when $q$ is a primitive $m$-th root of unity with $m\geq 5$, computed its center, and classified its simple modules up to isomorphism. \\

Throughout this paper, \(\K\) denotes an algebraically closed field of
characteristic zero, and all algebras are assumed to be
\(\K\)-algebras.\\

\noindent Formally, the algebra   \(U_q^{+}(B_2)\) is the \(\K\)-algebra generated by two indeterminates  $e_1$ and $e_2$ subject to the quantum Serre relations,
\begin{align*}
e_1^{2}e_2 - (q^{2}+q^{-2})\,e_1 e_2 e_1 + e_2 e_1^{2} &= 0\\
e_2^{3}e_1 - (q^{2}+1+q^{-2})\,e_2^{2} e_1 e_2 + (q^{2}+1+q^{-2})\,e_2 e_1 e_2^{2}   - e_1 e_2^{3} &= 0
\end{align*}

Andruskiewitsch and Dumas \cite[Section 3.1.2]{AndruskiewitschDumas2008},   introduced a new set of generators for
$U_q^{+}(B_2)$. For this, they defined
$e_{3}=e_{1}e_{2}-q^{2}e_{2}e_{1}
$ and $
z=e_{2}e_{3}-q^{2}e_{3}e_{2}$. Thus, the set 
\(\{\, z^{i} e_{3}^{j} e_{1}^{k} e_{2}^{\ell} \mid i,j,k,\ell \in \mathbb{N}\,\}\) 
is a PBW basis for $U_q^{+}(B_2)$  and the latter turns out to be  the \(\K\)-algebra generated by
\(e_1,e_2,e_3\) and \(z\) subject to the relations:
\begin{align*}
&e_i z = z e_i, \ \text{ for }\, i=1,2,3,
&e_1 e_3 &= q^{-2} e_3 e_1,\\
&e_2 e_1 = q^{-2} e_1 e_2 - q^{-2} e_3, &e_2 e_3 &= q^{2} e_3 e_2 + z.
\end{align*}
Furthermore, \(U_q^{+}(B_2)\) can be presented as an iterated Ore extension, 
\[
U_q^{+}(B_2) \;=\; \K[z,e_3][\,e_1;\sigma_1\,][\,e_2;\sigma_2,\delta_2\,],
\]
where
\begin{align*}
\sigma_1(z) &= z, 
& \sigma_1(e_3) &= q^{-2}e_3, \\[6pt]
\sigma_2(z) &= z, 
& \sigma_2(e_3) &= q^{2}e_3, 
& \sigma_2(e_1) &= q^{-2}e_1, \\[6pt]
\delta_2(z) &= 0, 
& \delta_2(e_3) &= z, 
& \delta_2(e_1) &= -q^{-2}e_3.
\end{align*}
In 2024, Kenneth Chan, Jason Gaddis, Robert Won, and James J. Zhang introduced the notion of the \textbf{ozone group} of an algebra \(A\) (see \cite{ChanGaddisWonZhang2024OzoneGroupsCenters}), denoted by \(\operatorname{Oz}(A)\), and defined by
$$
\operatorname{Oz}(A):=\operatorname{Aut}_{Z\text{-alg}}(A).
$$
\noindent Thus, \(\operatorname{Oz}(A)\) consists of those automorphisms of \(A\) that fix its center pointwise.  If \(\sigma\in\operatorname{Oz}(A)\), then \(\sigma\) acts as the identity on the center \(Z(A)\), that is,
\(\left.\sigma\right|_{Z(A)}=\operatorname{id}_{Z(A)}\). \\

The study of ozone groups of PI Artin--Schelter regular algebras has seen significant recent development. Chan, Gaddis, Won, and Zhang
\cite{ChanGaddisWonZhang2025} established general results concerning their
finiteness, their relationship with normal elements, the rank over the center,
and connections with the Calabi--Yau and Gorenstein properties. Subsequently,
Liu, Wu, and Zhu \cite{LiuWuZhu2026} proved that the ozone group of every
noetherian PI Artin--Schelter regular algebra is abelian. In this direction,
Gaddis and Yee \cite{GaddisYee2026} recently studied the family
\(B_q(f)\), obtaining families of Calabi--Yau algebras with trivial ozone
group, while the authors \cite{GomezVenegas2026} considered the
ozone groups of this family and showed that, when \(q\) has order \(n>1\),
\(\operatorname{Oz}(B_q(f))\cong\mu_e\times\mu_e\), where
\(e=\gcd(n,\{j+1:j\in\operatorname{supp}(f)\})\). \\

For \(U_q^{+}(B_2)\), although its PI property, PI degree, and center at
roots of unity have recently been studied
\cite{BeraMukherjee2026UqB2}, an explicit computation of its ozone group
does not appear to have been previously recorded. In this work, we prove
that \(U_q^{+}(B_2)\) is Artin--Schelter regular of global dimension \(4\)
and, writing \(\ell=\operatorname{ord}(q^2)\), we obtain
\(\operatorname{Oz}(U_q^{+}(B_2))\cong\mu_{\gcd(\ell,2)}\).
Among other consequences, this yields the Calabi--Yau property of
\(U_q^{+}(B_2)\) when \(\ell\) is odd. The homological analysis is carried
out by realizing \(U_q^{+}(B_2)\) as a graded skew PBW extension, which
allows us to apply the results on Auslander regularity, the
Cohen--Macaulay property, and strong Noetherianity from
\cite{LezamaVenegas2017Homological}. We also show that, when \(\ell\) is odd, \(U_q^+(B_2)\) is Calabi--Yau and its center \(Z(U_q^+(B_2))\) is Gorenstein.

\section{Preliminaries}
In this section, we introduce the main notions and homological properties that will be used throughout the paper.\\

We recall that a \(\K\)-algebra \(A\) is called a \emph{PI algebra} if
there exists a nonzero polynomial
\(f\in\K\langle x_1,\ldots,x_n\rangle\) such that
\(f(a_1,\ldots,a_n)=0\) for all \(a_1,\ldots,a_n\in A\)
\cite[\S13.1]{mcconnell}. Such a polynomial is called a
\emph{polynomial identity} of \(A\).

We also recall that a connected graded algebra
\(A=\bigoplus_{n\geq0}A_n\), with \(A_0=\K\), is called
\emph{Artin--Schelter Gorenstein} of dimension \(d\) if it has finite
injective dimension \(d\) on both sides and there exists
\(\mathfrak l\in\mathbb Z\) such that
\[
\operatorname{Ext}_{A}^{i}({}_A\K,{}_AA)
\cong
\operatorname{Ext}_{A^{\mathrm{op}}}^{i}(\K_A,A_A)
\cong
\begin{cases}
0, & i\neq d,\\
\K(\mathfrak l), & i=d.
\end{cases}
\]
If, in addition, \(A\) has finite global dimension and finite
Gelfand--Kirillov dimension, then \(A\) is called
\emph{Artin--Schelter regular}
\cite[Definition~0.1]{ChanGaddisWonZhang2025}.

For an \(A\)-module \(M\), the \emph{homological grade} is defined by
\[
j_A(M)=\min\{i:\operatorname{Ext}_A^i(M,A)\neq0\},
\]
with \(j_A(M)=\infty\) if no such \(i\) exists. 

A Noetherian algebra \(A\) is said to satisfy the
\emph{Auslander condition} if, for every finitely generated left or
right \(A\)-module \(M\), every \(i\geq0\), and every submodule
\(N\) of \(\operatorname{Ext}_A^i(M,A)\), one has
\[
j_A(N)\geq i.
\]
The algebra \(A\) is called \emph{Auslander--regular} if it satisfies
the Auslander condition and has finite global dimension
\cite[Definition~2.1]{LezamaVenegas2017Homological}. Moreover, \(A\) is
called \emph{Cohen--Macaulay} with respect to the classical
Gelfand--Kirillov dimension if
\[
\operatorname{GKdim}(A)
=
j_A(M)+\operatorname{GKdim}(M)
\]
for every nonzero Noetherian \(A\)-module \(M\)
\cite[Definition~3.1]{LezamaVenegas2017Homological}.

 Moreover, a
\(\K\)-algebra \(A\) is called \emph{strongly Noetherian} if
\(A\otimes_{\K}C\) is Noetherian for every commutative Noetherian
\(\K\)-algebra \(C\).

Finally, let \(A\) be a graded algebra and let
\(A^e=A\otimes_{\K}A^{\operatorname{op}}\). The algebra \(A\) is called
\emph{skew Calabi--Yau} of dimension \(d\) if it is homologically smooth
and there exist an automorphism \(\mu\) of \(A\) and
\(\ell\in\mathbb Z\) such that
\[
\operatorname{Ext}_{A^e}^{i}(A,A^e)
\cong
\begin{cases}
0, & i\neq d,\\
{}^{1}\!A^{\mu}(\ell), & i=d,
\end{cases}
\]
as graded \(A^e\)-modules
\cite[Definition~0.1]{ReyesRogalskiZhang2014}.
A skew Calabi--Yau algebra is called \emph{Calabi--Yau} if its Nakayama
automorphism is inner. In the connected graded domain setting considered
here, this is equivalent to the Nakayama automorphism being the identity.

\section[The case where q is not a root of unity]{The ozone group }
The ozone group of \(U_q^{+}(B_2)\) is determined by distinguishing the
cases where \(q\) is and is not a root of unity. In the generic case, we
use the known descriptions of the automorphism group and the center. In
the root-of-unity case, with \(\ell=\operatorname{ord}(q^2)\), the
structure of the center allows us to restrict the automorphisms that fix
it pointwise and thereby determine the ozone group.

\subsection[The case where q is not a root of unity]{When \(q\) is not a root of unity}
Suppose that \(q\in\K^{*}\) is not a root of unity. In this case, the
automorphism group of \(U_q^{+}(B_2)\) is known. Andruskiewitsch and Dumas
\cite[Remark~3.2 and Proposition~3.3]{AndruskiewitschDumas2008}
observed that, in type \(B_2\), the Dynkin diagram automorphism group is
trivial and determined the subgroup of automorphisms stabilizing the ideal
generated by the central element \(z\). Subsequently, Launois
\cite[Proposition~4.3 and Theorem~4.4]{Launois2007} proved that every
automorphism of \(U_q^{+}(B_2)\) stabilizes this ideal. Consequently,
\[
\operatorname{Aut}_{\K}\!\left(U_q^{+}(B_2)\right)
=
\left\{
\psi_{\alpha,\beta}
\mid
\alpha,\beta\in\K^{*}
\right\}
\cong
(\K^{*})^{2},
\]
where
\[
\psi_{\alpha,\beta}(e_1)=\alpha e_1,\qquad
\psi_{\alpha,\beta}(e_2)=\beta e_2,\qquad
\psi_{\alpha,\beta}(e_3)=\alpha\beta\,e_3,\qquad
\psi_{\alpha,\beta}(z)=\alpha\beta^2\,z.
\]
Andruskiewitsch and Dumas
\cite[Lemma~3.1]{AndruskiewitschDumas2008} proved that
\(Z\!\left(U_q^{+}(B_2)\right)=\K[z,z']\), where
\[
z'=(1-q^{-4})(1-q^{-2})z_1,
\qquad
z_1=e_1e_2e_3+\frac{e_1z}{q^2-1}+\frac{e_3^2}{q^4-1}.
\]
Therefore,
\[
Z\!\left(U_q^{+}(B_2)\right)=\K[z,z_1].
\]
\begin{theorem}
Suppose that \(q\in\K^{*}\) is not a root of unity. Then
\[
\operatorname{Oz}\!\left(U_q^{+}(B_2)\right)
=
\{\operatorname{id},\tau\},
\]
where
\[
\tau(e_1)=e_1,
\qquad
\tau(e_2)=-e_2,
\qquad
\tau(e_3)=-e_3,
\qquad
\tau(z)=z.
\]
\end{theorem}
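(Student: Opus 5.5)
Since every automorphism is of the form \(\psi_{\alpha,\beta}\) by the result of Launois and Andruskiewitsch--Dumas recalled above, computing \(\operatorname{Oz}(U_q^{+}(B_2))\) reduces to finding those pairs \((\alpha,\beta)\in(\K^{*})^{2}\) for which \(\psi_{\alpha,\beta}\) fixes the center \(Z(U_q^{+}(B_2))=\K[z,z_1]\) pointwise. Because \(\psi_{\alpha,\beta}\) is an algebra map, it fixes \(\K[z,z_1]\) pointwise if and only if it fixes the two generators \(z\) and \(z_1\). So the plan is to compute \(\psi_{\alpha,\beta}(z)\) and \(\psi_{\alpha,\beta}(z_1)\), and solve the resulting equations in \(\alpha,\beta\).

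The first condition is immediate: \(\psi_{\alpha,\beta}(z)=\alpha\beta^{2}z\), so we need \(\alpha\beta^{2}=1\). For the second, each monomial in
\[
z_1=e_1e_2e_3+\frac{e_1z}{q^2-1}+\frac{e_3^2}{q^4-1}
\]
is an eigenvector of \(\psi_{\alpha,\beta}\). The scalars are \(\alpha\cdot\beta\cdot\alpha\beta=\alpha^{2}\beta^{2}\) for \(e_1e_2e_3\), \(\alpha\cdot\alpha\beta^{2}=\alpha^{2}\beta^{2}\) for \(e_1z\), and \((\alpha\beta)^{2}\) for \(e_3^{2}\). Thus \(z_1\) is homogeneous and \(\psi_{\alpha,\beta}(z_1)=\alpha^{2}\beta^{2}z_1\). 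Since \(z_1\neq0\) (its expansion in the PBW basis contains a nonzero term, e.g. \(e_3^2\) with coefficient \((q^4-1)^{-1}\), which is defined because \(q\) is not a root of unity), the condition is \(\alpha^{2}\beta^{2}=1\).

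Combining, we get \(\alpha=\beta^{-2}\), and then \(\beta^{-4}\beta^{2}=\beta^{-2}=1\). Hence \(\beta=\pm1\) and \(\alpha=1\). The pair \((1,1)\) gives the identity. The pair \((1,-1)\) gives \(e_1\mapsto e_1\), \(e_2\mapsto -e_2\), \(e_3\mapsto -e_3\), \(z\mapsto z\), which is exactly \(\tau\). Both maps are genuine automorphisms because they lie in the listed family, and \(\tau^{2}=\operatorname{id}\), so \(\operatorname{Oz}=\{\operatorname{id},\tau\}\cong\mu_2\).

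There is no real obstacle here, since the heavy lifting (the determination of \(\operatorname{Aut}\) and of the center) is quoted. The only point needing care is to justify that the scalar \(\alpha^2\beta^2\) is forced. This follows by comparing coefficients of a single PBW monomial, using that the PBW monomials are linearly independent and that \(\psi_{\alpha,\beta}\) acts diagonally on them.
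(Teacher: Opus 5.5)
Your proposal is correct and follows the paper's proof: you restrict to the torus \(\psi_{\alpha,\beta}\), impose \(\alpha\beta^{2}=1\) from \(z\) and \(\alpha^{2}\beta^{2}=1\) from \(z_1\), and conclude \(\alpha=1\), \(\beta=\pm1\). You also verify that \(z_1\neq0\), which is needed to cancel it from \((\alpha^{2}\beta^{2}-1)z_1=0\); the paper leaves this implicit.
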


\begin{proof}
For
\(\psi_{\alpha,\beta}\in\operatorname{Aut}(U_q^{+}(B_2))\),
we have
\[
\psi_{\alpha,\beta}(z)=\alpha\beta^2z,
\qquad
\psi_{\alpha,\beta}(z_1)=\alpha^2\beta^2z_1.
\]
It follows that
\(\psi_{\alpha,\beta}\in\operatorname{Oz}\!\left(U_q^{+}(B_2)\right)\)
if and only if
\[
\alpha\beta^2=1,
\qquad
\alpha^2\beta^2=1.
\]
These equalities imply
\[
\alpha=1,
\qquad
\beta^2=1.
\]
Since \(\operatorname{char}(\K)=0\), we obtain \(\beta=\pm1\). Therefore,
\[
\operatorname{Oz}\!\left(U_q^{+}(B_2)\right)
=
\{\psi_{1,1},\psi_{1,-1}\}
=
\{\operatorname{id},\tau\}.
\]
\end{proof}
\subsection[The case where q is a root of unity]{When \(q\) is a root of unity}
Throughout this subsection, we assume that \(q\in\K^*\) is a primitive \(m\)-th root of unity, with \(m\geq5\). Let
$$
\ell=\operatorname{ord}(q^2)
=
\frac{m}{\gcd(m,2)}.
$$
In particular,
$$
\ell=
\begin{cases}
m, & \text{if \(m\) is odd},\\
m/2, & \text{if \(m\) is even}.
\end{cases}
$$
\\
For \(k\geq 0\), define the \(q\)-numbers
\[
[k]_{q^2}:=\frac{q^{2k}-1}{q^2-1},
\qquad
[k]_{q^{-4}}:=\frac{q^{-4k}-1}{q^{-4}-1}.
\]
which are well defined because \(m\geq5\) forces \(q^{2}\neq1\) and
\(q^{4}\neq1\). These \(q\)-numbers satisfy the recurrence relations
\[
q^{2k}+[k]_{q^2}=[k+1]_{q^2},
\qquad
q^{-4k}+[k]_{q^{-4}}=[k+1]_{q^{-4}},
\]
which will be used throughout the sequel.

\begin{lemma}{\cite[Lemma 2.1]{BeraMukherjee2026UqB2}} \label{lem:B2}
The identities below hold in  $U_q^+(B_2)$:
\begin{enumerate}[\rm (i)]
\item $e_2 e_3^{\,k}=q^{2k} e_3^{\,k} e_2+ [k]_{q^2}\, z\, e_3^{\,k-1}.$
  \item $e_2^{\,k} e_3=q^{2k} e_3 e_2^{\,k}+[k]_{q^2}\, z\, e_2^{\,k-1}.$
  \item $e_2 e_1^{\,k}=q^{-2k} e_1^{\,k} e_2-q^{-2}\,[k]_{q^{-4}}\, e_3 e_1^{\,k-1}.$
  \item $e_2^{k} e_1= q^{-2k} e_1 e_2^{k}- q^{-2} B_k\, e_3 e_2^{k-1}
- C_k\, z\, e_2^{k-2}$, 
where 
\begin{align*}
B_k=\frac{q^{2k}-q^{-2k}}{q^{2}-q^{-2}},
\;\;\;\;\;
C_k = q^{-2(k-1)}\,\frac{(q^{2k}-1)(q^{2(k-1)}-1)}{(q^{4}-1)(q^{2}-1)}.
\end{align*}
for $k \geq 2$.
\end{enumerate}
\end{lemma}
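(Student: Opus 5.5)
The plan is to prove all four identities by induction on \(k\), using only the defining relations of \(U_q^{+}(B_2)\) in the generators \(e_1,e_2,e_3,z\). The relations needed are: \(z\) is central, \(e_1e_3=q^{-2}e_3e_1\), \(e_2e_1=q^{-2}e_1e_2-q^{-2}e_3\), and \(e_2e_3=q^2e_3e_2+z\). In each case the base case \(k=1\) is just a defining relation, or is trivial.

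For (i), I would write \(e_2e_3^{k+1}=(e_2e_3^k)e_3\) and substitute the inductive hypothesis. Then I apply \(e_2e_3=q^2e_3e_2+z\) to the term \(q^{2k}e_3^ke_2e_3\), and use that \(z\) is central. The coefficient of \(z\,e_3^{k}\) becomes \(q^{2k}+[k]_{q^2}=[k+1]_{q^2}\).

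Part (ii) is symmetric. I write \(e_2^{k+1}e_3=e_2(e_2^ke_3)\), substitute the hypothesis, and rewrite the leading \(e_2e_3\). The same recurrence for \([k]_{q^2}\) then closes the induction.

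For (iii), I multiply the hypothesis on the right by \(e_1\) and rewrite \(e_2e_1=q^{-2}e_1e_2-q^{-2}e_3\). This produces the term \(-q^{-2k-2}e_1^ke_3\). Commuting \(e_3\) to the left via \(e_1^ke_3=q^{-2k}e_3e_1^k\) gives \(-q^{-2}q^{-4k}e_3e_1^k\). Adding \(-q^{-2}[k]_{q^{-4}}e_3e_1^k\) yields \(-q^{-2}[k+1]_{q^{-4}}e_3e_1^k\) by the recurrence for \([k]_{q^{-4}}\).

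For (iv), I would start the induction at \(k=1\), with the conventions \(B_1=1\) and \(C_1=0\) (which agree with the given formulas), so that the case \(k=1\) is the defining relation for \(e_2e_1\). The inductive step computes \(e_2^{k+1}e_1=e_2(e_2^ke_1)\) as follows:
\begin{itemize}
\item rewrite \(e_2e_1\) in the first term;
\item rewrite \(e_2e_3=q^2e_3e_2+z\) in the second term;
\item use the centrality of \(z\) in the third term.
\end{itemize}
Comparing coefficients, the statement reduces to two scalar recurrences:
\[
B_{k+1}=q^2B_k+q^{-2k},\qquad C_{k+1}=C_k+q^{-2}B_k .
\]
The first follows at once from
\[
q^2(q^{2k}-q^{-2k})+q^{-2k}(q^2-q^{-2})=q^{2k+2}-q^{-2k-2}.
\]

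I expect the main obstacle to be verifying the recurrence for \(C_k\) with the closed form given. My approach is to clear the common denominator \((q^4-1)(q^2-1)\), write \(q^{-2}B_k=q^{-2k}(q^{4k}-1)/(q^4-1)\), and factor \(q^{4k}-1=(q^{2k}-1)(q^{2k}+1)\). After pulling out \(q^{-2k}(q^{2k}-1)\), the identity reduces to a polynomial equation in \(x=q^{2}\), which I would check directly. If a mismatch appears, I would recompute the case \(k=2\) by hand from the relations to fix the normalization of \(C_k\).
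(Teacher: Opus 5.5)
Your proposal is correct. The paper gives no proof of its own (it quotes Lemma~2.1 of Bera--Mukherjee), and your induction on $k$ from the four defining relations is the standard argument; for (iv), the recurrences $B_{k+1}=q^2B_k+q^{-2k}$ and $C_{k+1}=C_k+q^{-2}B_k$ are exactly what the inductive step requires. The $C_k$ check you were worried about also goes through with the stated normalization: after factoring out $q^{-2k}(q^{2k}-1)$ it reduces to $(x^{k+1}-1)-x(x^{k-1}-1)=(x-1)(x^{k}+1)$ with $x=q^2$, so no fallback to the case $k=2$ is needed.
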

\begin{proposition}
{\cite[Corollary 2.2]{BeraMukherjee2026UqB2}}\label{centro2} 
The elements \(z\), \(e_{1}^{l}\), \(e_{2}^{l}\), and \(e_{3}^{l}\) are central in \(U_q^{+}(B_2)\).
\end{proposition}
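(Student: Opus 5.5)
Since $z$ already commutes with $e_1,e_2,e_3$ by the defining relations, the claim reduces to showing that $e_1^{\ell}$, $e_2^{\ell}$ and $e_3^{\ell}$ are central. Here $\ell=\operatorname{ord}(q^2)$, which is the exponent written $l$ in the statement. Because $U_q^{+}(B_2)$ is generated by $e_1$ and $e_2$ (with $e_3=e_1e_2-q^2e_2e_1$ and $z=e_2e_3-q^2e_3e_2$), it is enough to check that each of these three powers commutes with $e_1$ and with $e_2$. I would prefer testing against $e_1,e_2,e_3$ wherever that is convenient, since the skew commutation relations among $e_1,e_3$ and $z$ are the simplest ones.

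\textbf{The powers $e_1^{\ell}$ and $e_3^{\ell}$.} Iterating $e_1e_3=q^{-2}e_3e_1$ gives $e_1^{\ell}e_3=q^{-2\ell}e_3e_1^{\ell}=e_3e_1^{\ell}$. For the commutator with $e_2$, I would use Lemma~\ref{lem:B2}(iii) with $k=\ell$:
\[
e_2e_1^{\ell}=q^{-2\ell}e_1^{\ell}e_2-q^{-2}[\ell]_{q^{-4}}\,e_3e_1^{\ell-1}.
\]
Here $q^{-2\ell}=1$. The coefficient $[\ell]_{q^{-4}}$ equals $(q^{-4\ell}-1)/(q^{-4}-1)$, and $q^{-4\ell}=(q^{2\ell})^{-2}=1$, so it vanishes. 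Hence $e_2e_1^{\ell}=e_1^{\ell}e_2$. The same pattern handles $e_3^{\ell}$: it commutes with $e_1$ because $q^{2\ell}=1$, and Lemma~\ref{lem:B2}(i) gives
\[
e_2e_3^{\ell}=q^{2\ell}e_3^{\ell}e_2+[\ell]_{q^2}\,z\,e_3^{\ell-1},
\]
where $[\ell]_{q^2}=(q^{2\ell}-1)/(q^2-1)=0$.

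\textbf{The power $e_2^{\ell}$.} Lemma~\ref{lem:B2}(ii) with $k=\ell$ gives $e_2^{\ell}e_3=e_3e_2^{\ell}$, again because $[\ell]_{q^2}=0$. For $e_1$, Lemma~\ref{lem:B2}(iv) (note $\ell\ge 3$ since $m\ge5$) gives
\[
e_2^{\ell}e_1=q^{-2\ell}e_1e_2^{\ell}-q^{-2}B_\ell\,e_3e_2^{\ell-1}-C_\ell\,z\,e_2^{\ell-2}.
\]
The factor $C_\ell$ contains $q^{2\ell}-1=0$, so it vanishes.

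\textbf{Main obstacle: the coefficient $B_\ell$.} We have $B_\ell=(q^{2\ell}-q^{-2\ell})/(q^2-q^{-2})$. This is well defined because $q^4\ne1$, and it is zero because $q^{2\ell}=q^{-2\ell}=1$. The delicate point is only to make sure that none of the denominators $q^2-1$, $q^4-1$, $q^2-q^{-2}$ vanish; this holds because $m\ge5$.

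With all these coefficients zero, each of $e_1^{\ell}$, $e_2^{\ell}$, $e_3^{\ell}$ commutes with both generators $e_1$ and $e_2$, and is therefore central.
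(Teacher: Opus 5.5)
Your proof is correct and follows essentially the same route as the paper. You use $e_1e_3=q^{-2}e_3e_1$ together with Lemma~\ref{lem:B2}(i)--(iv) at $k=\ell$, note that $q^{2\ell}=1$ makes $[\ell]_{q^2}$, $[\ell]_{q^{-4}}$, $B_\ell$ and $C_\ell$ vanish, and conclude that each power commutes with the generators. Your explicit checks that the denominators are nonzero (because $q^4\neq1$) and that $\ell\geq3$, so part (iv) applies, are small but welcome additions to the paper's write-up.
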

\begin{proof}
It follows that
\begin{align*}
 q^{2l}=1,\quad [l]_{q^{2}}=0,\quad [l]_{q^{-4}}=0,\quad B_l=0,\quad C_l=0.
\end{align*}
From the defining relations of \(U_q^{+}(B_2)\), it follows that $z$ is a central element. On the other hand, by  Lemma \ref{lem:B2}--(i) with $k=l$ one obtains 
\[
e_2 e_3^{\,l}=q^{2l} e_3^{\,l} e_2 + [l]_{q^{2}} z e_3^{\,l-1}=e_3^{\,l} e_2.
\]
Moreover, from the relation $e_1 e_3=q^{-2} e_3 e_1$ it follows by induction that 
$e_1 e_3^{\,l}=q^{-2l} e_3^{\,l} e_1=e_3^{\,l} e_1$. Hence,  $e_3^{\,l}$ commute with $e_1$ and $e_2$.
Analogously, Lemma \ref{lem:B2}--(ii), for $k=l$, it follows
\[
e_2^{\,l} e_3 = q^{2l} e_3 e_2^{\,l} + [l]_{q^{2}} z e_2^{\,l-1}= e_3 e_2^{\,l}.
\]
Once again, Lemma \ref{lem:B2}--(iv) implies
\[
e_2^{\,l} e_1 = q^{-2l} e_1 e_2^{\,l} - q^{-2} B_l\, e_3 e_2^{\,l-1} - C_l\, z e_2^{\,l-2}
= e_1 e_2^{\,l};
\]
that is, $e_2^{\,l}$ commute with $e_1$ and $e_3$. From the relation $e_1 e_3=q^{-2} e_3 e_1$ can be proved that $e_3 e_1^{\,l}=q^{2l} e_1^{\,l} e_3 = e_1^{\,l} e_3$. Indeed, Lemma \ref{lem:B2}--(iii), assuming $k=l$, implies
\[
e_2 e_1^{\,l}= q^{-2l} e_1^{\,l} e_2 - q^{-2}[l]_{q^{-4}} e_3 e_1^{\,l-1}
= e_1^{\,l} e_2.
\]
Thus, $e_1^{\,l}$ commute  with $e_2$ and $e_3$. 
Finally, since  $e_1^{\,l}$, $e_2^{\,l}$, $e_3^{\,l}$ and $z$ commute with the generators of $U_q^{+}(B_2)$, it is concluded that these elements belong to its center.
\end{proof}

\begin{proposition}{\cite[Proposition 2.3]{BeraMukherjee2026UqB2}}\label{Propi}
 $U_q^{+}(B_2)$ is a PI algebra if and only if $q$ is a root of unity.
\end{proposition}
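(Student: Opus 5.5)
Two directions have to be proved: if \(q\) is a root of unity then \(U_q^{+}(B_2)\) is PI, and if \(q\) is not a root of unity then it is not PI.

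\emph{Root of unity implies PI.} Assume \(q\) is a primitive \(m\)-th root of unity and let \(l=\operatorname{ord}(q^2)\). By Proposition~\ref{centro2}, the elements \(z,e_1^{l},e_2^{l},e_3^{l}\) are central. Put
\[
C=\K[z,e_1^{l},e_2^{l},e_3^{l}]\subseteq Z(U_q^{+}(B_2)).
\]
I will show that \(U_q^{+}(B_2)\) is a finitely generated \(C\)-module.

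The PBW basis \(\{z^{i}e_3^{j}e_1^{k}e_2^{n}\}\) lets me write each monomial with \(j=lj'+j_0\), \(k=lk'+k_0\), \(n=ln'+n_0\), where \(0\le j_0,k_0,n_0<l\). Since \(e_3^{l}\) and \(e_1^{l}\) are central, they can be moved freely, and
\[
z^{i}e_3^{j}e_1^{k}e_2^{n}=z^{i}(e_3^{l})^{j'}(e_1^{l})^{k'}(e_2^{l})^{n'}\,e_3^{j_0}e_1^{k_0}e_2^{n_0}.
\]
Hence the finitely many monomials \(e_3^{j_0}e_1^{k_0}e_2^{n_0}\) generate \(U_q^{+}(B_2)\) as a \(C\)-module. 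The ring \(C\) is commutative Noetherian, being a quotient of a polynomial ring in four variables. An algebra that is a finitely generated module over a commutative central subalgebra is PI (\cite[13.1.13]{mcconnell}): it embeds into \(\operatorname{End}_C(M)\) for a finitely generated \(C\)-module \(M\), and hence satisfies a standard identity.

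\emph{Not a root of unity implies not PI.} Here I will exhibit a non-PI subalgebra, since a subalgebra of a PI algebra is PI. The relation \(e_1e_3=q^{-2}e_3e_1\) shows that \(e_1,e_3\) generate a quotient of the quantum plane \(\K_{q^{-2}}[x,y]\). By the PBW basis, the monomials \(e_3^{j}e_1^{k}\) are linearly independent, so this subalgebra is exactly the quantum plane \(\K_{q^{-2}}[x,y]\). When \(q^{-2}\) is not a root of unity the quantum plane is not PI, by the following argument. It is a domain. If it were PI, Posner's theorem would give it a classical quotient ring that is finite-dimensional over its center. However, its center is \(\K\): a central polynomial \(\sum c_{jk}e_3^je_1^k\) commutes with \(e_1\) only if \(q^{2j}=1\) whenever \(c_{jk}\neq0\), and with \(e_3\) only if \(q^{-2k}=1\), which forces \(j=k=0\). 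The center of the quotient division ring is also \(\K\) by a similar monomial-degree argument. Then the quotient division ring would be finite-dimensional over \(\K\), which contradicts the fact that \(\operatorname{GKdim}=2\).

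A cleaner route, which avoids the division-ring center computation, is Kaplansky's theorem. A PI primitive algebra is finite-dimensional over its center. The quantum torus \(\K_{q^{-2}}[x^{\pm1},y^{\pm1}]\) is a localization of the quantum plane, and it is simple with center \(\K\) when \(q^{-2}\) is not a root of unity. It would be PI if the plane were, since PI passes to Ore localizations of prime PI rings. Kaplansky's theorem would then make it finite-dimensional over \(\K\), which is absurd.

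The main obstacle is this second direction. I expect to cite standard facts here, namely that the quantum torus is simple with trivial center and that Ore localization preserves PI, rather than reprove them.
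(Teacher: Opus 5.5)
Your proposal is correct and follows the same route as the paper. For one direction, $U_q^{+}(B_2)$ is a finitely generated module over the central subalgebra generated by $z,e_1^{\ell},e_2^{\ell},e_3^{\ell}$. For the other, $\K\langle e_1,e_3\rangle$ is a quantum plane with parameter $q^{-2}$. The only differences are that you justify this isomorphism via the PBW basis and prove that the quantum plane is not PI where the paper cites De Concini--Procesi; your Kaplansky argument on the quantum torus is the complete version of that proof, and the division-ring-center sketch is not needed.
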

\begin{proof}
Consider the subalgebra $A:= \K\langle e_1,e_3\rangle$ of $U_q^{+}(B_2)$. The relation  $e_1e_3=q^{-2}e_3e_1$ shows that $A$ is isomorphic to the quantum plane with parameter $\lambda=q^{-2}$:
\[
A\ \cong\  \K\langle x,y\mid yx=\lambda xy\rangle,\qquad \lambda=q^{-2}.
\]
Hence, by \cite[Section \S 7.1]{DeConciniProcesi1993} one obtains  $q^{-2}$ is a root of unity, which implies that $q$ is a root of unity. Conversely, suppose that \(q\) is a root of unity and let
\(\ell=\operatorname{ord}(q^2)\). By
Proposition~\ref{centro2},

$$
R:=\K[e_1^\ell,e_2^\ell,e_3^\ell,z]
\subseteq Z(U_q^{+}(B_2)).
$$

For a monomial \(e_1^ae_2^be_3^c\), write
\(a=u\ell+i\), \(b=v\ell+j\), and \(c=w\ell+k\), with
\(0\leq i,j,k<\ell\). Using the identities in
Lemma~\ref{lem:B2} together with the relation
\(e_1e_3=q^{-2}e_3e_1\), the factors
\(e_1^\ell,e_2^\ell,e_3^\ell\) can be moved successively to the left, so that
$$
e_1^ae_2^be_3^c
\in
\sum_{0\leq i,j,k<\ell}
R\,e_1^ie_2^je_3^k.
$$
Hence, \(U_q^{+}(B_2)\) is finitely generated as an \(R\)-module, with
generating set
$$
\mathcal B=
\{e_1^ie_2^je_3^k:0\leq i,j,k<\ell\}.
$$
Since \(R\) is central, \cite[Corollary~13.1.13]{mcconnell} implies that
\(U_q^{+}(B_2)\) is PI.
\end{proof}
The central elements obtained above allow us to describe the center of \(U_q^{+}(B_2)\) completely. Indeed, Bera and Mukherjee \cite[Theorem~9.2]{BeraMukherjee2026UqB2} proved that it is generated by these elements together with an additional central element \(z_1\).
\begin{lemma}{\cite[Theorem~9.2]{BeraMukherjee2026UqB2}}
\label{centroU}
$$
Z\!\left(U_q^{+}(B_2)\right)
=
\K[e_1^\ell,e_2^\ell,e_3^\ell,z,z_1],
\qquad
z_1
=
e_1e_2e_3
+
\frac{e_1z}{q^2-1}
+
\frac{e_3^2}{q^4-1}.
$$
\end{lemma}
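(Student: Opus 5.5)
We must prove that the center is exactly $\K[e_1^\ell,e_2^\ell,e_3^\ell,z,z_1]$ when $q$ is a primitive $m$-th root of unity with $m\ge5$. The argument has three parts.

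\textbf{Centrality of $z_1$.} By Proposition~\ref{centro2}, $e_1^\ell,e_2^\ell,e_3^\ell,z$ are central, so the inclusion ``$\supseteq$'' only needs $z_1$ to be central. It suffices to check that $z_1$ commutes with $e_1$ and $e_2$, since these generate the algebra. This is a direct computation with the relations $e_1e_3=q^{-2}e_3e_1$, $e_2e_1=q^{-2}e_1e_2-q^{-2}e_3$, $e_2e_3=q^2e_3e_2+z$ and the centrality of $z$. For instance, $e_1(e_1e_2e_3)-(e_1e_2e_3)e_1$ reduces to a multiple of $e_1e_3^2$ and of $e_1^2 z$-type terms, and these are cancelled by the commutators of $e_1$ with $\frac{e_3^2}{q^4-1}$ and $\frac{e_1z}{q^2-1}$. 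These are the same identities as in the generic case of \cite{AndruskiewitschDumas2008}, which never use that $q$ is not a root of unity; only $q^2\neq1$ and $q^4\neq1$ are needed, and both hold since $m\ge5$.

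\textbf{Reduction to a $\mathbb Z^2$-graded problem.} For ``$\subseteq$'', use the $\mathbb Z^2$-grading with $\deg e_1=(1,0)$, $\deg e_2=(0,1)$, $\deg e_3=(1,1)$, $\deg z=(1,2)$. The center is a graded subalgebra, so it is enough to treat a homogeneous central element $c$. Write $c$ in the PBW basis $z^ie_3^je_1^ke_2^n$. Since $z$ is central and each of $e_1,e_2,e_3$ acts on PBW monomials, we can apply the ``leading term'' argument of the iterated Ore extension $\K[z,e_3][e_1;\sigma_1][e_2;\sigma_2,\delta_2]$.

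\textbf{Leading-term induction.} Write $c=\sum_n a_n e_2^n$ with $a_n\in\K[z,e_3][e_1;\sigma_1]$.
\begin{enumerate}[(1)]
\item From $e_1c=ce_1$ and Lemma~\ref{lem:B2}(iv), the top $e_2$-degree coefficient gives $e_1a_N=q^{-2N}a_Ne_1$. Together with $e_3c=ce_3$ and Lemma~\ref{lem:B2}(ii), this forces the exponents in $a_N$ to satisfy congruences mod $\ell$.
\item The key step is to subtract a suitable element $p\in\K[e_1^\ell,e_2^\ell,e_3^\ell,z,z_1]$ with the same leading term, so that the $e_2$-degree or the total degree strictly drops.
\item The leading term of $z_1$ is $e_1e_2e_3$, which is $q^{\pm2}e_3e_1e_2$ in PBW order. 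Hence leading monomials $e_3^je_1^ke_2^n$ with $j\equiv k\equiv n \pmod \ell$ (up to multiples of $\ell$) are realized by $z_1^r e_1^{\ell a}e_2^{\ell b}e_3^{\ell c}$ with $r\equiv n$.
\item One must show the congruence conditions from step (1) are exactly $k-j\equiv0$ and $n-j\equiv 0 \pmod\ell$, in addition to whatever the $z$-coefficients allow. This comes from comparing $q$-powers: commuting with $e_1$ multiplies $e_3^je_1^ke_2^n$ by $q^{2j-2n}$ at leading order, and commuting with $e_2$ gives $q^{2j-2k}$ at leading order.
\end{enumerate}
Iterating the subtraction then concludes by induction on degree.

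The main obstacle is step (2) when the leading coefficient involves $z$. There we must show no ``new'' central elements arise, for example $z$ times something that is not in the algebra generated. I would instead dodge this by a rank count. The algebra $U_q^+(B_2)$ is free over $R=\K[e_1^\ell,e_2^\ell,e_3^\ell,z]$ of rank $\ell^3$, and its PI degree is known to be $\ell$ by \cite{BeraMukherjee2026UqB2}. Hence the center $Z$ has rank $\ell$ over $R$, the fraction field extension has degree $\ell$, and $z_1$ satisfies a degree-$\ell$ polynomial over $R$, as in the De Concini--Procesi degeneration. Since $R[z_1]$ has rank $\ell$ and is integrally closed (a normal hypersurface, checked by the Jacobian criterion), it equals the center, which is integral over $R$ and contained in the fraction field.
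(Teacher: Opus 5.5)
The paper gives no proof of this lemma; it imports it from Bera--Mukherjee, so your proposal has to stand on its own. The inclusion $\supseteq$ is fine. The centrality of $z_1$ uses only $q^4\neq1$; for example, $z_1e_1-e_1z_1=-e_1e_3^2+\frac{e_3^2e_1-e_1e_3^2}{q^4-1}=0$, and the $e_2$-commutator is a similar check.

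\textbf{The gap.} It lies in $\subseteq$ as you finally argue it. After the rank count you know only that $[Q(Z):Q(R)]=\ell$. Everything then rests on the claim that $R[z_1]$ is integrally closed, ``checked by the Jacobian criterion''. That check is neither carried out nor possible from what you have written. To run it you would need:
\begin{enumerate}[(a)]
\item the explicit monic relation $P(z_1)=0$ with $P\in R[t]$ of degree $\ell$, i.e.\ a closed formula for $z_1^\ell$ in terms of lower powers of $z_1$ and $e_1^\ell,e_2^\ell,e_3^\ell,z$ (the correction terms $ze_1$ and $e_3^2$ make this a genuine computation);
\item a proof that the singular locus of this hypersurface has codimension at least $2$.
\end{enumerate}
You would also have to justify that $1,z_1,\dots,z_1^{\ell-1}$ are $R$-independent, so that $Q(R)(z_1)=Q(Z)$. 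As written, the rank count only relocates the whole difficulty into an unproved normality claim.

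\textbf{How to close it.} The leading-term induction you abandoned already finishes the proof. The ``obstacle'' with $z$ does not exist, because $z\in R$.
\begin{enumerate}[(1)]
\item Write a central element as $c=\sum_{n\le N}a_ne_2^n$ with $a_n\in S=\K[z,e_3][e_1;\sigma_1]$ and $a_N\neq0$. The $e_2^{N+1}$-coefficient of $e_2c-ce_2$ gives $\sigma_2(a_N)=a_N$. The $e_2^{N}$-coefficient of $e_1c-ce_1$ gives $e_1a_N=q^{-2N}a_Ne_1$.
\item For $a_N=\sum\kappa_{ijk}z^ie_3^je_1^k$, these two conditions say exactly that $j\equiv k\equiv N\pmod\ell$ on the support, with no condition on $i$.
\item Since $e_1e_2e_3=e_3e_1e_2+ze_1$ (the scalar is $1$, not $q^{\pm2}$), one has $z_1=e_3e_1e_2+\frac{q^2}{q^2-1}ze_1+\frac{1}{q^4-1}e_3^2$. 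Hence $z_1^r$ has top $e_2$-term $\lambda_re_3^re_1^re_2^r$ with $\lambda_r\in\K^*$, because leading coefficients in the domain $S[e_2;\sigma_2,\delta_2]$ multiply via $\sigma_2$, and $\sigma_2(e_3^{s}e_1^{s})=e_3^{s}e_1^{s}$.
\item Write $j=r+\ell\alpha$, $k=r+\ell\beta$, $N=r+\ell\gamma$ with $0\le r<\ell$. Then $z^ie_3^{\ell\alpha}e_1^{\ell\beta}e_2^{\ell\gamma}z_1^r\in R[z_1]$ is central with top term $\lambda_rz^ie_3^{j}e_1^{k}e_2^N$; use $e_1^{\ell\beta}e_3^r=e_3^re_1^{\ell\beta}$.
\item Subtracting the corresponding combination from $c$ keeps it central and lowers its $e_2$-degree. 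Induction on $N$ (the case $N=0$ forces $a_0\in\K[z,e_3^\ell,e_1^\ell]$) gives $c\in R[z_1]$.
\end{enumerate}
Neither the $\mathbb Z^2$-grading nor the PI degree is needed.
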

\begin{proposition}
\label{rigidez-ozono-UqB2}
Let \(\varphi\in\operatorname{Aut}(U_q^+(B_2))\) such that
\(\varphi(e_1^\ell)=e_1^\ell\) and \(\varphi(e_2^\ell)=e_2^\ell\). Then
there exist \(a,b\in\mu_\ell\) such that
$$
\varphi(e_1)=ae_1,\qquad
\varphi(e_2)=be_2,\qquad
\varphi(e_3)=ab\,e_3,\qquad
\varphi(z)=ab^2z.
$$
\end{proposition}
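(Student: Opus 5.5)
Our strategy is to show first that \(\varphi\) preserves the natural \(\mathbb N^2\)-grading (equivalently, the total degree filtration in which \(\deg e_1=(1,0)\) and \(\deg e_2=(0,1)\)). After that, the condition \(\varphi(e_i^\ell)=e_i^\ell\) pins down the scalars.

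\textbf{Step 1: the leading terms of \(\varphi(e_1)\) and \(\varphi(e_2)\).} Write \(\varphi(e_1)=f\) and \(\varphi(e_2)=g\), expanded in the PBW basis \(\{z^ie_3^je_1^ke_2^{s}\}\). Grade by total degree, with \(\deg e_1=\deg e_2=1\), \(\deg e_3=2\), \(\deg z=3\). The associated graded algebra with respect to the PBW order is a quantum affine space, hence a domain, so leading terms multiply.
\begin{itemize}
\item From \(f^\ell=e_1^\ell\): \(f\) has no constant term, since otherwise the degree-zero part of \(f^\ell\) would be nonzero. Hence \(\operatorname{lt}(f)^\ell=e_1^\ell\) in the associated graded, so \(\operatorname{lt}(f)\) is homogeneous of degree \(1\) and \(f\) has total degree \(1\).
\item Compare multidegrees in the \(\mathbb N^2\)-grading, \(\deg e_3=(1,1)\), \(\deg z=(1,2)\). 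Since the associated graded is a domain, the top and bottom homogeneous components of \(f\) must both have \(\ell\)-th power of multidegree \((\ell,0)\). Hence \(f\) is \(\mathbb N^2\)-homogeneous of degree \((1,0)\), which forces \(f=ae_1\).
\item Similarly \(g=be_2\). Then \(a^\ell=b^\ell=1\) follows from \(f^\ell=e_1^\ell\) and \(g^\ell=e_2^\ell\).
\end{itemize}
The argument with highest and lowest components must be done carefully, using a monomial order on PBW monomials compatible with the \(\mathbb Z^2\)-grading. This is possible because the algebra is \(\mathbb N^2\)-graded with homogeneous generators, so \(f^\ell\) decomposes into multigraded pieces.

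\textbf{Step 2: the remaining generators.} Since \(e_3=e_1e_2-q^2e_2e_1\) and \(z=e_2e_3-q^2e_3e_2\), applying \(\varphi\) gives \(\varphi(e_3)=ab\,e_3\) and \(\varphi(e_2)\)-twisted \(\varphi(z)=b\cdot ab\,z=ab^2z\).

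\textbf{Main obstacle.} The main difficulty is the homogeneity argument in Step 1: ruling out lower-order or cross-multidegree terms in \(f\) and \(g\). The \(\mathbb Z^2\)-grading is a genuine algebra grading, since the relations are homogeneous. The plan is to take extreme components with respect to a generic linear functional on \(\mathbb Z^2\). The product of the two extreme components of \(f\) is nonzero because \(U_q^+(B_2)\) is a domain, being an iterated Ore extension. Hence the extreme components of \(f^\ell\) are the \(\ell\)-th powers of the extreme components of \(f\). Since \(f^\ell\) is homogeneous, every extreme component of \(f\) has degree \((1,0)\), so \(f\) is homogeneous. The degree-\((1,0)\) space is \(\K e_1\), which gives \(f=ae_1\).
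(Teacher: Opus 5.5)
Your proof is correct, but it removes the unwanted terms of \(\varphi(e_1)\) differently from the paper.

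\textbf{The paper's route.} It uses only the standard \(\mathbb N\)-grading, with \(\deg e_1=\deg e_2=1\), \(\deg e_3=2\), \(\deg z=3\). Since \(A\) is a domain, top degrees add, so \(\varphi(e_1)=ae_1+be_2+c\). The constant \(c\) is removed by comparing degree-zero parts, which gives \(c^\ell=0\). The coefficient \(b\) is removed by reducing modulo \(\langle e_1\rangle\): there \(e_3=z=0\) and \(A/\langle e_1\rangle\cong\K[e_2]\), so \(b^\ell e_2^\ell=0\).

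\textbf{Your route.} You pass to the finer \(\mathbb Z^2\)-grading with \(e_3\) in degree \((1,1)\) and \(z\) in degree \((1,2)\). This grading is legitimate because every defining relation is bihomogeneous. You then order \(\mathbb Z^2\) by a linear functional that is injective on the support. The domain property forces every extreme component of \(f\) into degree \((1,0)\). That component is \(\K e_1\), by inspection of the PBW monomials \(z^ie_3^je_1^ke_2^s\).

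\textbf{Comparison.} Your approach disposes of the constant term and the \(e_2\)-term in one step and needs no quotient computation. The cost is checking the bigrading. The paper's approach needs only the grading already in use, plus one explicit quotient.

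Your first bullet is superfluous, as is the remark that an associated graded algebra is a quantum affine space. \(A\) is itself graded and a domain, and that is all your final argument uses.

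In fact one coordinate of your bigrading suffices. Grade by \(e_2\)-weight: \(e_1\mapsto0\), \(e_2,e_3\mapsto1\), \(z\mapsto2\). The top component of \(f\) must then have weight \(0\), so \(f\in\K[e_1]\). The equation \(f^\ell=e_1^\ell\) in a polynomial ring gives \(f=ae_1\) with \(a^\ell=1\) directly. The symmetric \(e_1\)-weight grading handles \(\varphi(e_2)\) the same way.
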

\begin{proof}
Let \(A=U_q^+(B_2)\). Consider the standard grading on \(A\), determined by

$$
\deg(e_1)=\deg(e_2)=1,\qquad
\deg(e_3)=2,\qquad
\deg(z)=3.
$$
The defining relations are homogeneous, so \(A\) is a connected
\(\mathbb N\)-graded algebra. Moreover, \(A\) admits a presentation as an
iterated Ore extension over \(\K[z,e_3]\), and hence it is a domain.
Consequently, \(\deg(x^\ell)=\ell\deg(x)\) for every \(x\neq0\). From
\(\varphi(e_1^\ell)=e_1^\ell\), we obtain
$$
\ell\deg(\varphi(e_1))
=\deg(\varphi(e_1)^\ell)
=\deg(e_1^\ell)
=\ell,
$$
and therefore \(\deg(\varphi(e_1))=1\). Since
\(A_0=\K\) and \(A_1=\K e_1\oplus\K e_2\), there exist \(a,b,c\in\K\) such that
$$
\varphi(e_1)=ae_1+be_2+c.
$$
Comparing the degree-zero components in
\(\varphi(e_1)^\ell=e_1^\ell\), we obtain \(c^\ell=0\), and hence \(c=0\). Thus,
$$
\varphi(e_1)=ae_1+be_2.
$$
Now consider the quotient \(A/\langle e_1\rangle\). Since \(e_1=0\), the relations

$$
e_2e_1=q^{-2}e_1e_2-q^{-2}e_3,
\qquad
e_2e_3=q^2e_3e_2+z
$$
imply that \(e_3=z=0\). Hence, \(A/\langle e_1\rangle\cong\K[e_2]\). Reducing the equality
$$
(ae_1+be_2)^\ell=e_1^\ell
$$
modulo \(\langle e_1\rangle\), we obtain \(b^\ell e_2^\ell=0\). Since
\(\K[e_2]\) is a domain, \(b=0\), and therefore \(\varphi(e_1)=ae_1\).
Moreover, from \(\varphi(e_1^\ell)=e_1^\ell\), we obtain \(a^\ell=1\), that is,
\(a\in\mu_\ell\).
\noindent Analogously, from \(\varphi(e_2^\ell)=e_2^\ell\) and the quotient
\(A/\langle e_2\rangle\cong\K[e_1]\), we obtain
$$
\varphi(e_2)=be_2
$$
for some \(b\in\mu_\ell\). Finally,
$$
\varphi(e_3)=ab(e_1e_2-q^2e_2e_1)=ab\,e_3
\qquad\text{and}\qquad
\varphi(z)=ab^2(e_2e_3-q^2e_3e_2)=ab^2z.
$$
\end{proof}
\begin{theorem}\label{ozono-UqB2}
$$
\operatorname{Oz}(U_q^+(B_2))\cong \mu_{\gcd(\ell,2)}.
$$
More precisely,
$$
\operatorname{Oz}(U_q^+(B_2))
\cong
\begin{cases}
\{1\},&\ell\text{ odd},\\[1mm]
\mu_2,&\ell\text{ even}.
\end{cases}
$$
\end{theorem}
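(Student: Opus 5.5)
The plan is to combine Proposition~\ref{rigidez-ozono-UqB2} with the explicit description of the center in Lemma~\ref{centroU}. Any \(\varphi\in\operatorname{Oz}(U_q^+(B_2))\) fixes \(e_1^\ell\) and \(e_2^\ell\), since these lie in the center. Proposition~\ref{rigidez-ozono-UqB2} then forces
\[
\varphi=\psi_{a,b},\qquad \psi_{a,b}(e_1)=ae_1,\quad \psi_{a,b}(e_2)=be_2,
\]
with \(a,b\in\mu_\ell\). Conversely, each pair \(a,b\in\K^*\) defines an automorphism \(\psi_{a,b}\), because the quantum Serre relations are homogeneous in each of \(e_1\) and \(e_2\) separately. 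The map \((a,b)\mapsto\psi_{a,b}\) is an injective homomorphism. Hence \(\operatorname{Oz}(U_q^+(B_2))\) is identified with the set of pairs \((a,b)\in\mu_\ell^2\) for which \(\psi_{a,b}\) fixes each generator of \(Z(U_q^+(B_2))=\K[e_1^\ell,e_2^\ell,e_3^\ell,z,z_1]\).

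Next I impose these conditions one generator at a time. Using \(\psi_{a,b}(e_3)=ab\,e_3\) and \(\psi_{a,b}(z)=ab^2z\), we have:
\begin{itemize}
\item \(e_1^\ell\mapsto a^\ell e_1^\ell\), \(e_2^\ell\mapsto b^\ell e_2^\ell\) and \(e_3^\ell\mapsto (ab)^\ell e_3^\ell\), all of which are automatic for \(a,b\in\mu_\ell\);
\item \(z\mapsto ab^2 z\), which forces \(ab^2=1\);
\item \(z_1\mapsto a^2b^2 z_1\), since each of the three summands \(e_1e_2e_3\), \(e_1z\) and \(e_3^2\) is multiplied by \(a^2b^2\). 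This forces \(a^2b^2=1\).
\end{itemize}
Here I use that \(z\) and \(z_1\) are nonzero, which is clear from the PBW basis. Dividing the two conditions gives \(a=1\), and then \(b^2=1\). So the ozone group is \(\{\psi_{1,b}: b\in\mu_\ell,\ b^2=1\}\), that is, \(\mu_\ell\cap\mu_2=\mu_{\gcd(\ell,2)}\). This set is trivial when \(\ell\) is odd and equals \(\{\pm1\}\) when \(\ell\) is even.

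It remains to check that the elements found really are ozone automorphisms. In the even case, \(\tau=\psi_{1,-1}\) is an automorphism that fixes \(z\), \(z_1\) and the three \(\ell\)-th powers, since \((-1)^\ell=1\). It therefore fixes the whole center.

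The main point needing care is that the argument uses only the generators listed in Lemma~\ref{centroU}, so it relies on that lemma being a complete description of the center. Given that lemma, the rest is the bookkeeping above, most of which is already done in Proposition~\ref{rigidez-ozono-UqB2}. A secondary point is the weight computation for \(z_1\), which requires checking that all three summands carry the same multidegree \((2,2)\) in \((e_1,e_2)\).
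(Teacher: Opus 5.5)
Your proposal is correct and is essentially the paper's proof. Proposition~\ref{rigidez-ozono-UqB2} reduces any ozone automorphism to $\psi_{a,b}$ with $a,b\in\mu_\ell$, the central elements $z$ and $z_1$ force $ab^2=a^2b^2=1$, hence $a=1$ and $b\in\mu_\ell\cap\mu_2=\mu_{\gcd(\ell,2)}$, and the converse is checked on the generators of the center from Lemma~\ref{centroU}. Your extra justifications (multihomogeneity of the Serre relations making each $\psi_{a,b}$ an automorphism, and $z,z_1\neq 0$ via the PBW basis) fill in details the paper leaves implicit, and your caveat about relying on Lemma~\ref{centroU} affects only the converse direction, which can also be confirmed independently since $\tau=\eta_{e_1^{\ell/2}}$ is conjugation by a normal element (Proposition~\ref{prop:normal-explicito}).
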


\begin{proof}
Let \(\varphi\in\operatorname{Oz}(A)\). Since
\(e_1^\ell,e_2^\ell\in Z(A)\), by Proposition~\ref{rigidez-ozono-UqB2}
there exist \(a,b\in\mu_\ell\) such that
$$
\varphi(e_1)=ae_1,\qquad
\varphi(e_2)=be_2,\qquad
\varphi(e_3)=ab\,e_3,\qquad
\varphi(z)=ab^2z.
$$
Since \(z\in Z(U_q^{+}(B_2))\), we have \(\varphi(z)=z\), and hence
\(ab^2=1\). Likewise, since \(z_1\in Z(U_q^{+}(B_2))\),
$$
\varphi(z_1)
=a^2b^2\left(e_1e_2e_3+\frac{e_1z}{q^2-1}+\frac{e_3^2}{q^4-1}\right)
=a^2b^2z_1=z_1.
$$
Thus, \(a^2b^2=1\). Together with \(ab^2=1\), this implies \(a=1\) and
\(b^2=1\). Moreover, \(b^\ell=1\); therefore,
\(b\in\mu_{\gcd(\ell,2)}\). Consequently,
$$
\operatorname{Oz}(U_q^{+}(B_2))\subseteq
\{\psi_\varepsilon:\varepsilon\in\mu_{\gcd(\ell,2)}\}.
$$
Conversely, let \(\varepsilon\in\mu_{\gcd(\ell,2)}\). Then
\(\varepsilon^\ell=\varepsilon^2=1\). Consider the graded automorphism
$$
\psi_\varepsilon(e_1)=e_1,\qquad
\psi_\varepsilon(e_2)=\varepsilon e_2.
$$
Then \(\psi_\varepsilon(e_3)=\varepsilon e_3\) and
\(\psi_\varepsilon(z)=z\). Moreover,
$$
\psi_\varepsilon(e_1^\ell)=e_1^\ell,\qquad
\psi_\varepsilon(e_2^\ell)=e_2^\ell,\qquad
\psi_\varepsilon(e_3^\ell)=e_3^\ell,\qquad
\psi_\varepsilon(z_1)=\varepsilon^2z_1=z_1.
$$
Since
\(Z(U_q^{+}(B_2))=\K[e_1^\ell,e_2^\ell,e_3^\ell,z,z_1]\),
\(\psi_\varepsilon\) fixes \(Z(U_q^{+}(B_2))\) pointwise. Therefore,
$$
\psi_\varepsilon\in\operatorname{Oz}(U_q^{+}(B_2)).
$$
\end{proof}

\begin{proposition}\label{prop:normal-explicito}
Suppose that \(\ell\) is even. Then \(e_1^{\ell/2}\) is a noncentral normal
element of \(U_q^{+}(B_2)\), and
\[
e_1^{\ell/2}\,e_1=e_1\,e_1^{\ell/2},\qquad
e_1^{\ell/2}\,e_2=-\,e_2\,e_1^{\ell/2},\qquad
e_1^{\ell/2}\,e_3=-\,e_3\,e_1^{\ell/2},\qquad
e_1^{\ell/2}\,z=z\,e_1^{\ell/2}.
\]
Consequently \(\eta_{e_1^{\ell/2}}=\psi_{-1}\) generates
\(\operatorname{Oz}(U_q^{+}(B_2))\).
\end{proposition}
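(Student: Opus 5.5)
Set \(n=\ell/2\), so that \(q^{2n}=-1\) (since \(\operatorname{ord}(q^2)=\ell\)) and \(q^{-4n}=1\). The plan is to compute directly how \(e_1^{n}\) commutes with each of the generators \(e_1,e_2,e_3,z\), using the commutation identities already recorded. The relations with \(e_1\) and \(z\) are immediate: powers of \(e_1\) commute with \(e_1\), and \(z\) is central by the defining relations.

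For \(e_3\), the relation \(e_1e_3=q^{-2}e_3e_1\) gives by induction \(e_1^{n}e_3=q^{-2n}e_3e_1^{n}=-e_3e_1^{n}\). For \(e_2\), we apply Lemma~\ref{lem:B2}(iii) with \(k=n\):
\[
e_2e_1^{n}=q^{-2n}e_1^{n}e_2-q^{-2}[n]_{q^{-4}}\,e_3e_1^{n-1}.
\]
Since \(q^{-4}\) has order \(\ell/\gcd(\ell,2)=n\), we get \([n]_{q^{-4}}=(q^{-4n}-1)/(q^{-4}-1)=0\). Here \(q^{-4}\neq1\) because \(m\ge5\). Hence \(e_2e_1^{n}=-e_1^{n}e_2\), which is equivalent to \(e_1^{n}e_2=-e_2e_1^{n}\).

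This is the only step needing care: the coefficient of the correction term must vanish. That requires \(q^{-4n}=q^{-2\ell}=1\) and \(q^{-4}\neq1\), both of which follow from \(m\geq5\).

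From these relations, conjugation by \(e_1^{n}\) agrees with \(\psi_{-1}\) on the generators \(e_1,e_2\). Since \(A\) is a domain, \(e_1^{n}\) is regular. Because \(A=\sum_{i}\K\,e_1^{i}\)-words in the generators, the relations give \(e_1^{n}A=Ae_1^{n}\), so \(e_1^{n}\) is normal with normalizing automorphism \(\eta_{e_1^{n}}=\psi_{-1}\), where \(e_1^{n}x=\psi_{-1}(x)e_1^{n}\). The element is noncentral because \(e_1^{n}e_2=-e_2e_1^{n}\neq e_2e_1^{n}\): the product \(e_2e_1^{n}\) is nonzero in a domain and \(\operatorname{char}\K=0\).

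Finally, Theorem~\ref{ozono-UqB2} with \(\ell\) even gives \(\operatorname{Oz}(U_q^+(B_2))=\{\operatorname{id},\psi_{-1}\}\cong\mu_2\). Since \(\psi_{-1}\neq\operatorname{id}\), the automorphism \(\eta_{e_1^{\ell/2}}=\psi_{-1}\) generates the ozone group.
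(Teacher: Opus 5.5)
Your proposal is correct and follows the paper's proof essentially step by step: induction on \(e_1e_3=q^{-2}e_3e_1\) with \(q^{-2n}=-1\), Lemma~\ref{lem:B2}(iii) with \([n]_{q^{-4}}=0\), centrality of \(z\), and Theorem~\ref{ozono-UqB2} to conclude that \(\psi_{-1}\) generates the ozone group. Your convention \(e_1^{n}x=\psi_{-1}(x)e_1^{n}\) differs from the paper's \(\eta_a(x)=a^{-1}xa\), which is harmless because \(\psi_{-1}\) is an involution. Your garbled sentence about words in the generators should simply say that the set of \(x\) with \(e_1^{n}x=\psi_{-1}(x)e_1^{n}\) is a subalgebra containing the generators.
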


\begin{proof}
Put \(k=\ell/2\). From \(e_1e_3=q^{-2}e_3e_1\) one gets by induction
\(e_1^{k}e_3=q^{-2k}e_3e_1^{k}\); since \(q^{2}\) has order \(\ell\) and
\(0<k<\ell\), the scalar \(q^{-2k}=(q^{2})^{-\ell/2}\) is a square root of
\(1\) different from \(1\), that is, \(q^{-2k}=-1\). Hence
\(e_1^{k}e_3=-e_3e_1^{k}\).

By Lemma~\ref{lem:B2}(iii),
\[
e_2 e_1^{k}=q^{-2k} e_1^{k} e_2-q^{-2}\,[k]_{q^{-4}}\, e_3 e_1^{k-1},
\qquad
[k]_{q^{-4}}=\frac{q^{-4k}-1}{q^{-4}-1}=\frac{q^{-2\ell}-1}{q^{-4}-1}=0,
\]
because \(q^{2\ell}=1\) and \(q^{4}\neq1\). Therefore
\(e_2e_1^{k}=-e_1^{k}e_2\). Since \(z\) is central, \(e_1^{k}\) commutes with
\(z\) and with \(e_1\), so \(e_1^{k}U_q^{+}(B_2)=U_q^{+}(B_2)e_1^{k}\) and
\(e_1^{k}\) is normal; it is not central because \(\ell\geq3\) forces
\(-1\neq1\). Finally, with the convention
\(\eta_a(x)=a^{-1}xa\) used in \cite[Lemma~1.10]{ChanGaddisWonZhang2025}, the
relations above give \(\eta_{e_1^{k}}(e_1)=e_1\),
\(\eta_{e_1^{k}}(e_2)=-e_2\), \(\eta_{e_1^{k}}(e_3)=-e_3\) and
\(\eta_{e_1^{k}}(z)=z\), that is, \(\eta_{e_1^{k}}=\psi_{-1}\).
\end{proof}

\begin{corollary}
\label{cor:rango-ozono-UqB2}
$$
\operatorname{rk}_{Z(U_q^{+}(B_2))}(U_q^{+}(B_2))=\ell^2
\qquad\text{and}\qquad
|\operatorname{Oz}(U_q^{+}(B_2))|
<
\operatorname{rk}_{Z(U_q^{+}(B_2))}(U_q^{+}(B_2)).
$$
\end{corollary}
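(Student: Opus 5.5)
The rank of $A=U_q^{+}(B_2)$ over its center should be $\ell^2$, and I would compute it by passing to fraction fields. Write $Z=Z(A)$ and let $Q(A)$, $Q(Z)$ be the quotient division ring and quotient field. Both exist because $A$ is a Noetherian PI domain, by Proposition~\ref{Propi} and the iterated Ore presentation. Then $\operatorname{rk}_Z(A)=\dim_{Q(Z)}Q(A)$, and this equals $\operatorname{PIdeg}(A)^2$. So it is enough to show $\operatorname{PIdeg}(A)=\ell$ and then compare with $|\operatorname{Oz}(A)|$.

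\textbf{Lower bound: $\operatorname{GKdim}$ of the center.} By Lemma~\ref{centroU}, $Z=\K[e_1^\ell,e_2^\ell,e_3^\ell,z,z_1]$. Since $\operatorname{GKdim}A=4$ (four PBW generators), $Z$ has transcendence degree $4$. So the five generators satisfy exactly one relation, presumably expressing $z_1^{\ell}$ up to lower terms through $e_1^\ell e_2^\ell e_3^\ell$, $z$, and so on.

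\textbf{Upper bound: the module over $R$.} Put $R=\K[e_1^\ell,e_2^\ell,e_3^\ell,z]$, a polynomial ring in four variables. By the PBW basis $\{z^ie_3^je_1^ke_2^m\}$, $A$ is free over $R$ with basis $\{e_3^je_1^ke_2^m:0\le j,k,m<\ell\}$. Hence $\dim_{Q(R)}Q(A)=\ell^3$.

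\textbf{Key step: the degree $[Q(Z):Q(R)]$.} This is where I expect the main work. By the relation among the generators, $Q(Z)=Q(R)(z_1)$, so I need $[Q(Z):Q(R)]=\ell$. Then
\[
\operatorname{rk}_Z(A)=\frac{\ell^3}{\ell}=\ell^2 .
\]
For the upper bound on the degree, I would derive the relation in which $z_1$ is annihilated by a monic polynomial of degree $\ell$ over $R$. My first attempt would use the quantum-torus localization: inverting $e_1$ and $e_3$ should turn $A$ into a quantum torus in which $z_1$ becomes a monomial. Alternatively, I would cite Bera--Mukherjee's computation $\operatorname{PIdeg}(A)=\ell$, which gives the rank $\ell^2$ directly. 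The lower bound then follows from $\dim_{Q(Z)}Q(A)$ being a perfect square dividing $\ell^3$ and being at least $\ell^2$.

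That lower bound $\dim_{Q(Z)}Q(A)\ge \ell^2$ comes from the quantum plane $\K\langle e_1,e_3\rangle$, whose PI degree is $\ell$. Indeed, $e_1e_3=q^{-2}e_3e_1$ and $q^{-2}$ has order $\ell$, so $\operatorname{PIdeg}(A)\ge\ell$.

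\textbf{The inequality.} By Theorem~\ref{ozono-UqB2}, $|\operatorname{Oz}(A)|=\gcd(\ell,2)\le 2$. Since $m\ge5$ forces $\ell\ge3$, we get $\ell^2\ge 9>2$. This gives the strict inequality, consistent with \cite{ChanGaddisWonZhang2025}, where $|\operatorname{Oz}|$ divides the rank and equality characterizes special cases.
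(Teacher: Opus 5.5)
Your fallback is the paper's proof: you cite Bera--Mukherjee's $\operatorname{PIdeg}(U_q^{+}(B_2))=\ell$, then use $\operatorname{rk}_Z A=\dim_{Q(Z)}Q(A)=\operatorname{PIdeg}(A)^2$, and compare via $\gcd(\ell,2)\le 2<9\le\ell^2$. Along that route the argument is fine. The self-contained computation you present as your main line, however, has a genuine gap.

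Your freeness claim is correct: $A$ is free over $R=\K[e_1^\ell,e_2^\ell,e_3^\ell,z]$ with basis $e_3^je_1^ke_2^r$, $0\le j,k,r<\ell$. Hence $\operatorname{rk}_Z A\cdot[Q(Z):Q(R)]=\ell^3$. But both bounds you then produce point the same way. A monic degree-$\ell$ relation for $z_1$ gives $[Q(Z):Q(R)]\le\ell$. The quantum plane $\K\langle e_1,e_3\rangle$ gives $\operatorname{PIdeg}A\ge\ell$. Each of these is equivalent to $\operatorname{rk}_Z A\ge\ell^2$, so nothing you wrote yields $\operatorname{rk}_Z A\le\ell^2$. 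The paragraph on $\GKdim Z=4$ bounds nothing either.

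Your squeeze, ``a perfect square dividing $\ell^3$ and at least $\ell^2$'', forces the value $\ell^2$ only when $\ell$ is squarefree. For $\ell=4$ (that is, $m=8$) both $16$ and $64$ qualify. For $\ell=8$ both $64$ and $256$ qualify, and for $\ell=9$ both $81$ and $729$.

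The missing inequality $\operatorname{PIdeg}A\le\ell$ comes from carrying your quantum-torus idea all the way, instead of using it only to find a relation for $z_1$. In the central localization $A[e_1^{-\ell},e_3^{-\ell}]$ set
\[
e_2'=e_2+\frac{1}{q^2-1}\,z\,e_3^{-1}+\frac{q^2}{q^4-1}\,e_3e_1^{-1}.
\]
A direct check gives $e_2'e_1=q^{-2}e_1e_2'$, $e_2'e_3=q^{2}e_3e_2'$ and $z_1=e_1e_2'e_3$. So $Q(A)$ is the quotient division ring of the quantum torus over $\K(z)$ generated by $e_1^{\pm1}$, $(e_2')^{\pm1}$, $e_3^{\pm1}$.

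In that torus, $e_1^a(e_2')^be_3^c$ is central if and only if $a\equiv b\equiv c\pmod{\ell}$. This is a sublattice of index $\ell^2$ in $\mathbb Z^3$, so $\operatorname{PIdeg}A=\ell$, with both inequalities obtained at once. This would give a genuine alternative to the citation, and it makes the detour through $[Q(Z):Q(R)]$ unnecessary.
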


\begin{proof}
Since \(U_q^{+}(B_2)\) is prime, PI, and finitely generated as a module over its center, and by \cite[Theorem~2.7]{BeraMukherjee2026UqB2}
$$
\operatorname{PIdeg}(U_q^{+}(B_2))=\ell,
$$

 Let
\(Q\) denote its total ring of fractions. Since \(U_q^{+}(B_2)\) is PI, one has
\(\GKdim(U_q^{+}(B_2))=\GKdim(Z(U_q^{+}(B_2)))\), so the hypothesis
\(\GKdim(A)<\GKdim(Z(A))+1\) of \cite[Theorem~2.1]{LezamaVenegas2020Serdica} is
satisfied and therefore \(Z(Q)\cong Q(Z(U_q^{+}(B_2)))\). By Posner's theorem
\cite[Theorem~13.6.5]{mcconnell}, \(Q\) is a central simple algebra over
\(Z(Q)\) of degree \(\operatorname{PIdeg}(U_q^{+}(B_2))=\ell\), whence

$$
\operatorname{rk}_{Z(U_q^{+}(B_2))}U_q^{+}(B_2) = \operatorname{PIdeg}(U_q^{+}(B_2))^2 = \ell^2.
$$
Moreover, by Theorem~\ref{ozono-UqB2},
$$
|\operatorname{Oz}(U_q^{+}(B_2))|
=
\gcd(\ell,2).
$$
Since \(m\geq5\), we have \(\ell\geq3\), and therefore
$$
\gcd(\ell,2)
=
|\operatorname{Oz}(U_q^{+}(B_2))|
<
\ell^2
=
\operatorname{rk}_{Z(U_q^{+}(B_2))}(U_q^{+}(B_2)).
$$
\end{proof}

\section{Some homological properties of
\texorpdfstring{$U_q^{+}(B_2)$}{Uq+(B2)}}
Graded skew PBW extensions were introduced by Suárez
\cite{Suarez2017KoszulityGradedSkewPBW} in 2017 as a generalization of
graded iterated Ore extensions. Subsequently, the first author and Suárez
\cite{GomezSuarez2020} studied their relationship with double Ore
extensions and proved that every graded skew PBW extension in two
variables over an Artin--Schelter regular algebra is again
Artin--Schelter regular. In what follows, we show that
\(U_q^{+}(B_2)\) can be realized as a graded skew PBW extension in two
variables and, as a consequence, establish that \(U_q^{+}(B_2)\) is
Artin--Schelter regular. We also investigate whether this algebra is Auslander-regular, 
Cohen--Macaulay, strongly noetherian and skew Calabi--Yau.
\begin{definition}[{\cite[Definition~1]{GallegoLezama}}]
Let \(R\subseteq A\) be rings. We say that \(A\) is a
\emph{skew PBW extension} of \(R\), and write
$$
A=\sigma(R)\langle x_1,\ldots,x_n\rangle,
$$
if \(A\) is a free left \(R\)-module with basis
$$
\operatorname{Mon}(A)=
\{x_1^{\alpha_1}\cdots x_n^{\alpha_n}:
(\alpha_1,\ldots,\alpha_n)\in\mathbb N^n\},
$$
and there exist injective endomorphisms \(\sigma_i:R\to R\),
\(\sigma_i\)-derivations \(\delta_i:R\to R\), and elements
\(c_{i,j}\in R\setminus\{0\}\) such that
$$
x_ir=\sigma_i(r)x_i+\delta_i(r),\qquad
x_jx_i-c_{i,j}x_ix_j\in R+Rx_1+\cdots+Rx_n,
$$
for \(r\in R\) and \(1\leq i<j\leq n\).
The extension is called \emph{bijective} if each \(\sigma_i\) is an
automorphism and each \(c_{i,j}\) is invertible in \(R\).
\end{definition}

\begin{definition}[{\cite[Proposition~2.7]{Suarez2017KoszulityGradedSkewPBW}}]
Let \(R=\bigoplus_{m\geq0}R_m\) be an \(\mathbb N\)-graded algebra and
let \(A=\sigma(R)\langle x_1,\ldots,x_n\rangle\) be a bijective skew PBW
extension of \(R\). We say that \(A\) is a \emph{graded skew PBW
extension} if \(\deg(x_i)=1\), each \(\sigma_i\) preserves the grading,
\(\delta_i(R_m)\subseteq R_{m+1}\), and, for \(1\leq i<j\leq n\),
$$
x_jx_i-c_{i,j}x_ix_j\in R_2+R_1x_1+\cdots+R_1x_n,
\qquad c_{i,j}\in R_0.
$$
In this case,
$$
A_p=\operatorname{span}_{\K}
\{r_tx_1^{\alpha_1}\cdots x_n^{\alpha_n}:
r_t\in R_t,\ t+\alpha_1+\cdots+\alpha_n=p\}.
$$
\end{definition}
\begin{proposition}
\label{prop:PBW-graduada-UqB2}
Let \(q\in\K^*\). Then \(U_q^{+}(B_2)\) is a graded bijective skew PBW extension
in two variables over \(R=\K[z,e_3]\), where \(\deg(e_3)=2\),
\(\deg(z)=3\), and \(\deg(e_1)=\deg(e_2)=1\).
\end{proposition}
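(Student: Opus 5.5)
We verify the axioms of the two definitions directly, with $R=\K[z,e_3]$, $x_1=e_1$, $x_2=e_2$, using the iterated Ore extension presentation recalled in the introduction. The first step is freeness. The PBW basis $\{z^ie_3^je_1^ke_2^\ell\}$ of $U_q^{+}(B_2)$ shows that $U_q^{+}(B_2)$ is a free left $R$-module with basis $\operatorname{Mon}=\{e_1^{k}e_2^{\ell}:k,\ell\in\mathbb N\}$. Since $R$ is a commutative polynomial ring, every left $R$-combination of the $z^ie_3^j$ is just an element of $R$.

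The second step checks the commutation rules of each variable with $R$:
\begin{itemize}
\item For $e_1$ we take $\sigma_1(z)=z$, $\sigma_1(e_3)=q^{-2}e_3$ and $\delta_1=0$. This is read off from $e_1z=ze_1$ and $e_1e_3=q^{-2}e_3e_1$, and extended multiplicatively.
\item For $e_2$, the relations $e_2z=ze_2$ and $e_2e_3=q^2e_3e_2+z$ give $\sigma_2(z)=z$, $\sigma_2(e_3)=q^2e_3$ and $\delta_2(z)=0$, $\delta_2(e_3)=z$, extended as a $\sigma_2$-derivation of $R$.
\end{itemize}
These extensions are well defined because $\sigma_i$ are diagonal automorphisms of the polynomial ring $R$. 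A $\sigma_2$-derivation on $\K[z,e_3]$ is determined by its values on generators once compatibility with $ze_3=e_3z$ holds. That compatibility requires
\[
\sigma_2(z)\delta_2(e_3)+\delta_2(z)e_3=\sigma_2(e_3)\delta_2(z)+\delta_2(e_3)z,
\]
which reduces to $z\cdot z=z\cdot z$. Both $\sigma_i$ are automorphisms, so the extension is bijective.

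The remaining relation is $e_2e_1=q^{-2}e_1e_2-q^{-2}e_3$. Here $c_{1,2}=q^{-2}\in\K^*=R_0$ is invertible and $-q^{-2}e_3\in R_2\subseteq R+Re_1+Re_2$, as both definitions require.

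The third step is the grading. Give $R$ the grading $\deg e_3=2$, $\deg z=3$, so $R$ is $\mathbb N$-graded, and set $\deg e_1=\deg e_2=1$.
\begin{itemize}
\item $\sigma_1,\sigma_2$ scale homogeneous generators, so they preserve degree.
\item $\delta_2$ sends $e_3\in R_2$ to $z\in R_3$ and $z$ to $0$. By the twisted Leibniz rule, $\delta_2(R_m)\subseteq R_{m+1}$ then follows by induction on monomials $z^ie_3^j$, since $\sigma_2$ is graded. Also $\delta_1=0$.
\item The relation $e_2e_1-q^{-2}e_1e_2=-q^{-2}e_3$ lies in $R_2$, as required.
\end{itemize}

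The main point needing care is the consistency of the grading: the defining relations must be homogeneous for these degrees. This holds because $e_3=e_1e_2-q^2e_2e_1$ has degree $2$ and $z=e_2e_3-q^2e_3e_2$ has degree $3$, which is compatible with the grading used in Proposition~\ref{rigidez-ozono-UqB2}.
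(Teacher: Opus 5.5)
Your proposal is correct and follows essentially the same route as the paper: freeness over $R=\K[z,e_3]$ from the PBW basis, reading $\sigma_1,\sigma_2,\delta_2$ off the defining relations, using $e_2e_1-q^{-2}e_1e_2=-q^{-2}e_3\in R_2$, and checking the grading conditions. Your added checks, that $\delta_2$ is a well-defined $\sigma_2$-derivation and that $\delta_2(R_m)\subseteq R_{m+1}$ holds on all monomials $z^ie_3^j$, supply details the paper leaves implicit.
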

\begin{proof}
Let \(R=\K[z,e_3]\), endowed with the grading determined by
\(\deg(e_3)=2\) and \(\deg(z)=3\). By
\cite[Section~3.1.2]{AndruskiewitschDumas2008}, the monomials
$$
\{z^ie_3^je_1^ke_2^r:i,j,k,r\geq0\}
$$
form a PBW basis of \(U_q^{+}(B_2)\). Hence,
\(U_q^{+}(B_2)\) is a free left \(R\)-module with basis
\(\{e_1^ke_2^r:k,r\geq0\}\).
The relations of \(e_1,e_2\) with \(R\) are determined by
\(\sigma_1(z)=z\), \(\sigma_1(e_3)=q^{-2}e_3\),
\(\sigma_2(z)=z\), \(\sigma_2(e_3)=q^2e_3\),
\(\delta_1=0\), \(\delta_2(z)=0\), and \(\delta_2(e_3)=z\), together with
$$
e_2e_1-q^{-2}e_1e_2=-q^{-2}e_3\in R_2.
$$
Thus, \(U_q^{+}(B_2)\) is a skew PBW extension of \(R\).
Moreover, \(\sigma_1,\sigma_2\) are automorphisms and
\(q^{-2}\in R_0^*\), so the extension is bijective. Finally,
\(\deg(e_1)=\deg(e_2)=1\), the maps \(\sigma_1,\sigma_2\) preserve the
grading, \(\delta_1=0\) is compatible with the grading, and \(\delta_2\)
has degree \(1\), since
\(\deg(\delta_2(e_3))=3=\deg(e_3)+1\) and \(\delta_2(z)=0\).
Furthermore, the relation above satisfies the graded condition because
its remaining term belongs to \(R_2\) and \(q^{-2}\in R_0\). Therefore,
\(U_q^{+}(B_2)=\sigma(R)\langle e_1,e_2\rangle\) is a graded skew PBW
extension of \(R\).
\end{proof}
\begin{theorem}
\label{AS-regular-UqB2}
Let \(q\in\K^*\). Then \(U_q^{+}(B_2)\) is an
Artin--Schelter regular algebra of global dimension \(4\).
\end{theorem}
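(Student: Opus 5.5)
Our approach is to use the structure just established in Proposition~\ref{prop:PBW-graduada-UqB2}. There, $U_q^{+}(B_2)$ is realized as a graded bijective skew PBW extension in the two variables $e_1,e_2$ over $R=\K[z,e_3]$, with $\deg(e_3)=2$ and $\deg(z)=3$. We then apply the result of the first author and Suárez \cite{GomezSuarez2020}: a graded skew PBW extension in two variables over an Artin--Schelter regular algebra is again Artin--Schelter regular, and its global dimension is that of the base plus two. The proof therefore reduces to two inputs: Artin--Schelter regularity of the base $R$, and verification of the hypotheses of that theorem.

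\textbf{Step 1: the base.} The base $R=\K[z,e_3]$ is a weighted commutative polynomial ring in two variables, connected graded since both generators have positive degree. The Koszul resolution of $\K$ over $R$ has length $2$, so $\operatorname{gldim}R=2$ and $\GKdim R=2$. The same resolution gives $\operatorname{Ext}^i_R(\K,R)=0$ for $i\neq 2$ and $\operatorname{Ext}^2_R(\K,R)\cong\K(5)$, where the shift is $\deg z+\deg e_3=5$. Hence $R$ is Artin--Schelter regular of dimension $2$.

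\textbf{Step 2: the extension.} Next we check that the data in Proposition~\ref{prop:PBW-graduada-UqB2} match exactly the hypotheses used in \cite{GomezSuarez2020}. The $\sigma_i$ are graded automorphisms of $R$, the $\delta_i$ raise degree by one, and $c_{1,2}=q^{-2}\in\K^*$. The point to watch is whether that paper requires $R$ to be generated in degree $1$, or requires the extension to be expressible as a graded double Ore extension. If it requires the double Ore form, we exhibit the relations $e_2e_1=q^{-2}e_1e_2-q^{-2}e_3$ in the form $x_2x_1=p\,x_1x_2+\tau_0$ with $\tau_0\in R_2$, and check that the Zhang--Zhang conditions hold.

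\textbf{Alternative if Step 2 fails.} The main obstacle is a mismatch of hypotheses in Step 2. If that happens, we instead use the iterated Ore extension $\K[z,e_3][e_1;\sigma_1][e_2;\sigma_2,\delta_2]$, in which every $\sigma$ is a graded automorphism and $\delta_2$ is a homogeneous $\sigma_2$-derivation of degree $1$. A graded Ore extension $B[x;\sigma,\delta]$ of an Artin--Schelter regular algebra $B$, with $\sigma$ a graded automorphism and $\deg x=1$, is Artin--Schelter regular of dimension $\operatorname{gldim}B+1$; this is the standard result of Artin--Schelter--Tate type for Ore extensions. Applying it twice starting from $R$ yields Artin--Schelter regularity of global dimension $2+2=4$.

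\textbf{Conclusion.} Finally, $\GKdim U_q^{+}(B_2)=4<\infty$, either from the PBW basis in four variables or from additivity of GK dimension over these Ore extensions. The algebra is connected graded, as noted in the proof of Proposition~\ref{rigidez-ozono-UqB2}. This completes the argument.
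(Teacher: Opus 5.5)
Your proposal is correct and follows the paper's argument. The paper realizes $U_q^{+}(B_2)$ as a graded skew PBW extension in two variables over $R=\K[z,e_3]$ (with $\deg e_3=2$, $\deg z=3$) and applies \cite[Theorem~3.10]{GomezSuarez2020}; your fallback via the graded iterated Ore extension and the Artin--Schelter--Tate result is the alternative the paper records in the Remark after the theorem, and your explicit checks (the Koszul computation giving $\operatorname{Ext}^2_R(\K,R)\cong\K(5)$, and $\GKdim U_q^{+}(B_2)=4$) fill in details the paper leaves implicit.
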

\begin{proof}
By Proposition~\ref{prop:PBW-graduada-UqB2},
\(U_q^{+}(B_2)\) is a graded skew PBW extension in two variables
over \(R=\K[z,e_3]\), where \(\deg(e_3)=2\) and \(\deg(z)=3\).
Since \(R\) is a commutative graded polynomial ring in two homogeneous
variables of positive degrees, it is Artin--Schelter regular of global
dimension \(2\). By \cite[Theorem~3.10]{GomezSuarez2020},
\(U_q^{+}(B_2)\) is Artin--Schelter regular and
$$\operatorname{gldim}(U_q^{+}(B_2))=\operatorname{gldim}(R)+2=4.$$
\end{proof}

\begin{remark}
As noted in the Introduction, \(U_q^{+}(B_2)\) admits a presentation as
a graded iterated Ore extension. Since the corresponding automorphisms
preserve the grading and the derivation is homogeneous of degree \(1\),
successive applications of
\cite[Proposition~2]{ArtinSchelterTate1991Quantum} also show that
\(U_q^{+}(B_2)\) is Artin--Schelter regular of global dimension \(4\).

\noindent Thus, the iterated Ore extension presentation provides an alternative
and direct proof of the previous theorem. However, realizing
\(U_q^{+}(B_2)\) as a bijective skew PBW extension allows one to apply
systematically a variety of transfer results from the base algebra to the
extension. In particular, the theory of skew PBW extensions provides
general criteria for preserving properties such as Auslander regularity,
the Cohen--Macaulay property, and strong Noetherianity
\cite{LezamaVenegas2017Homological}, as well as other homological,
structural, and \(K\)-theoretic results for bijective skew PBW extensions
\cite{LezamaReyes2014Homological}.
\end{remark}

\begin{theorem}
\label{teo:UqB2-Auslander-CM}
Let \(q\in\K^*\). Then \(U_q^{+}(B_2)\) is Auslander--regular, Cohen--Macaulay, and strongly Noetherian.
\end{theorem}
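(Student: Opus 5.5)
The plan is to transfer each of the three properties from the base ring \(R=\K[z,e_3]\) to \(U_q^{+}(B_2)\). By Proposition~\ref{prop:PBW-graduada-UqB2}, \(U_q^{+}(B_2)=\sigma(R)\langle e_1,e_2\rangle\) is a bijective skew PBW extension. The transfer theorems are those of \cite{LezamaVenegas2017Homological} for bijective skew PBW extensions. For Auslander regularity and the Cohen--Macaulay property, I expect them to require that \(R\) be Auslander regular, respectively Cohen--Macaulay, and that the extension be bijective. For strong Noetherianity, I expect them to require that \(R\) be strongly Noetherian.

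First, I will check these hypotheses on \(R\). Since \(R\) is a commutative polynomial ring in two variables over a field, it is Noetherian of global dimension \(2\). It is Auslander--Gorenstein because it is commutative regular: for such rings the Auslander condition follows from the classical fact that \(j(M)=\operatorname{codim}(M)\), and the grades of submodules of \(\operatorname{Ext}^i(M,R)\) are bounded below by \(i\). Hence \(R\) is Auslander regular. It is Cohen--Macaulay with respect to \(\GKdim\), since for finitely generated modules over an affine commutative regular ring one has \(\GKdim M+j(M)=\dim R=2\). Finally, for any commutative Noetherian \(\K\)-algebra \(C\), the ring \(R\otimes_\K C\cong C[z,e_3]\) is Noetherian by the Hilbert basis theorem, so \(R\) is strongly Noetherian.

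Second, I will apply the transfer results to \(U_q^{+}(B_2)\).
\begin{enumerate}[\rm (a)]
\item Auslander regularity should follow from the filtered--graded technique of \cite{LezamaVenegas2017Homological}. Filtering \(A\) by total degree in \(e_1,e_2\) gives an associated graded ring that is a quasi-commutative skew PBW extension, namely an iterated skew polynomial ring \(R[e_1;\sigma_1][e_2;\sigma_2]\) with bijective \(\sigma_i\). Auslander regularity passes to Ore extensions with automorphisms, and then lifts from the associated graded ring to \(A\) through the Zariski filtration.
\item Finite global dimension is already given by Theorem~\ref{AS-regular-UqB2}.
\item The Cohen--Macaulay property follows from the corresponding transfer theorem. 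The relevant value is \(\GKdim A=\GKdim R+2=4\).
\item Strong Noetherianity: for commutative Noetherian \(C\), the ring \(A\otimes_\K C\) is a bijective skew PBW extension of \(C[z,e_3]\), with the same \(\sigma_i,\delta_i\) extended \(C\)-linearly. A Hilbert basis theorem for bijective skew PBW extensions then gives that it is Noetherian.
\end{enumerate}

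The main obstacle I expect is the Cohen--Macaulay statement. The transfer there usually needs extra hypotheses, for instance that \(R\) has a finite-dimensional generating subframe stable under the \(\sigma_i,\delta_i\), so that \(\GKdim\) behaves additively and exactly. I would handle this by checking that \(V=\K+\K z+\K e_3\) satisfies \(\sigma_i(V)\subseteq V\) and \(\delta_2(V)\subseteq V\). This holds because \(\delta_2(e_3)=z\) and \(\delta_2(z)=0\).

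If that hypothesis is still not available, the fallback is a direct argument from Theorem~\ref{AS-regular-UqB2}. A Noetherian connected graded AS-regular algebra that is Auslander regular and has finite \(\GKdim\) is Cohen--Macaulay: use the Levasseur criterion, or the fact that \(A\) is finite over its center in the PI case. In the generic case, use the Zariski filtration together with the graded result for \(\operatorname{gr}A\).
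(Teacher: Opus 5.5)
Your proposal is correct and follows essentially the paper's route: Auslander regularity, the Cohen--Macaulay property, and strong Noetherianity are each transferred from $R=\K[z,e_3]$ through the bijective skew PBW structure, using the corresponding theorems of \cite{LezamaVenegas2017Homological} (Theorems~2.9, 3.9 and~4.3 there). The one adjustment is that the hypothesis the paper checks for the Cohen--Macaulay transfer is that $R$ is connected graded with $\sigma_i(R_n)\subseteq R_n$, which is immediate here, so your fallback is unnecessary; its general claim that AS-regular plus Auslander-regular forces Cohen--Macaulay is also not a standard theorem without extra input such as the PI hypothesis.
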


\begin{proof}
The algebra \(R=\K[z,e_3]\) is a commutative polynomial ring; in
particular, it is Auslander--regular, Cohen--Macaulay, and strongly
Noetherian. By Proposition~\ref{prop:PBW-graduada-UqB2},
\(U_q^{+}(B_2)=\sigma(R)\langle e_1,e_2\rangle\) is a bijective skew PBW
extension of \(R\). Hence, by
\cite[Theorem~2.9]{LezamaVenegas2017Homological},
\(U_q^{+}(B_2)\) is Auslander--regular.

Moreover, \(R\) is connected graded and
\(\sigma_i(R_n)\subseteq R_n\) for every \(n\geq0\) and \(i=1,2\).
Therefore, \cite[Theorem~3.9]{LezamaVenegas2017Homological} implies that
\(U_q^{+}(B_2)\) is Cohen--Macaulay. Finally, since \(R\) is strongly
Noetherian, \cite[Theorem~4.3]{LezamaVenegas2017Homological} implies that
\(U_q^{+}(B_2)\) is strongly Noetherian.
\end{proof}

Reyes, Rogalski, and
Zhang also proved that a connected graded algebra is skew
Calabi--Yau if and only if it is Artin--Schelter regular
\cite[Lemma~1.2]{ReyesRogalskiZhang2014}.
\begin{theorem}
\label{teo:UqB2-skew-CY}
Let \(q\in\K^*\). Then \(U_q^{+}(B_2)\) is skew Calabi--Yau
of dimension \(4\).
\end{theorem}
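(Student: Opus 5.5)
The plan is to combine Theorem~\ref{AS-regular-UqB2} with the criterion of Reyes, Rogalski and Zhang quoted just before the statement, \cite[Lemma~1.2]{ReyesRogalskiZhang2014}: a connected graded algebra is skew Calabi--Yau if and only if it is Artin--Schelter regular. Most of the work is checking hypotheses, plus making sure the dimension transfers correctly.

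First I will confirm that \(A=U_q^{+}(B_2)\) is connected graded for the grading \(\deg e_1=\deg e_2=1\), \(\deg e_3=2\), \(\deg z=3\). The defining relations (quantum Serre relations, or equivalently the Ore-extension relations) are homogeneous. The PBW basis \(\{z^ie_3^je_1^ke_2^r\}\) shows that each \(A_n\) is finite dimensional and that \(A_0=\K\), which is the same observation already used in the proof of Proposition~\ref{rigidez-ozono-UqB2}. Locally finiteness is needed for the RRZ lemma, and it follows from this basis as well.

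Next, Theorem~\ref{AS-regular-UqB2} gives that \(A\) is Artin--Schelter regular of global dimension \(4\). The Gelfand--Kirillov dimension is finite, equal to \(4\), since \(A\) is an iterated Ore extension of \(\K[z,e_3]\) with automorphisms that are diagonal on the generators. By \cite[Lemma~1.2]{ReyesRogalskiZhang2014}, \(A\) is therefore skew Calabi--Yau.

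The step needing most care is the dimension: \(d=4\). In RRZ's argument the Calabi--Yau dimension equals the AS dimension, that is, the injective dimension \(d\) for which \(\operatorname{Ext}^d_A(\K,A)\neq 0\). For a noetherian connected graded AS regular algebra this coincides with the global dimension, so it is \(4\). Concretely, \(\operatorname{Ext}^i_{A^e}(A,A^e)\) vanishes for \(i\neq4\) and is isomorphic to \({}^1A^{\mu}(\mathfrak l)\) for \(i=4\), where \(\mu\) is the Nakayama automorphism. Homological smoothness follows from the minimal free resolution of \(\K\) having finite length with finitely generated terms, and this is already part of the RRZ lemma.

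If desired, I would also note that \(\mathfrak l=1+1+2+3=7\), the sum of the generator degrees, as is usual for iterated Ore extensions. This is not needed for the statement.
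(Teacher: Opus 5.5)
Your proposal is correct and follows the same route as the paper: Theorem~\ref{AS-regular-UqB2} gives Artin--Schelter regularity of global dimension \(4\), the grading \(\deg e_1=\deg e_2=1\), \(\deg e_3=2\), \(\deg z=3\) makes the algebra connected graded, and \cite[Lemma~1.2]{ReyesRogalskiZhang2014} then yields skew Calabi--Yau of dimension \(4\). Your additional checks (local finiteness from the PBW basis, matching the Calabi--Yau dimension with the AS dimension, and the side remark \(\mathfrak l=7\)) are accurate and simply spell out details the paper's short proof leaves implicit.
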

\begin{proof}
By Theorem~\ref{AS-regular-UqB2},
\(U_q^{+}(B_2)\) is Artin--Schelter regular of global dimension \(4\).
Moreover, with the grading considered above,
\(U_q^{+}(B_2)\) is a connected graded algebra. Therefore, by
\cite[Lemma~1.2]{ReyesRogalskiZhang2014},
\(U_q^{+}(B_2)\) is skew Calabi--Yau of dimension \(4\).
\end{proof}

\subsection{Some applications of the ozone group computation}
In the previous sections, we established that, when \(q\) is a root of
unity, \(U_q^{+}(B_2)\) is a PI algebra, Artin--Schelter regular, Auslander regular, and
strongly Noetherian, and hence Noetherian. We also determined its ozone
group explicitly:
$$
\operatorname{Oz}(U_q^{+}(B_2))
\cong \mu_{\gcd(\ell,2)},
\qquad \ell=\operatorname{ord}(q^2).
$$
These properties place \(U_q^{+}(B_2)\) within the framework considered
by Chan, Gaddis, Won, and Zhang \cite{ChanGaddisWonZhang2025}. By combining
their general results with the ozone group computation obtained above,
we derive below several consequences concerning the Calabi--Yau property,
the Gorenstein property of the center, and the normal elements of
\(U_q^{+}(B_2)\).  Here, Gorenstein is understood in the usual commutative sense for \(Z(U_q^{+}(B_2))\).

\begin{theorem}
\label{teo:UqB2-center-Gorenstein}
Let \(q\in\K^*\) be a primitive \(m\)-th root of unity, with
\(m\geq5\), and let

$$
\ell=\operatorname{ord}(q^2)=\frac{m}{\gcd(m,2)}.
$$

If \(\ell\) is odd, then \(U_q^{+}(B_2)\) is Calabi--Yau and
\(Z(U_q^{+}(B_2))\) is Gorenstein.
\end{theorem}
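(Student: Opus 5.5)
When \(\ell\) is odd, Theorem~\ref{ozono-UqB2} gives \(\operatorname{Oz}(U_q^{+}(B_2))=\{1\}\). The plan is to combine this with the general results of Chan, Gaddis, Won and Zhang \cite{ChanGaddisWonZhang2025} for noetherian PI Artin--Schelter regular algebras.

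\textbf{Hypotheses.} Put \(A=U_q^{+}(B_2)\). By Proposition~\ref{Propi}, \(A\) is PI and is a finite module over its center. By Theorems~\ref{AS-regular-UqB2} and~\ref{teo:UqB2-Auslander-CM}, \(A\) is connected graded, Artin--Schelter regular of dimension \(4\), and strongly Noetherian, hence Noetherian. It is also a domain, being an iterated Ore extension. So \(A\) lies in the setting of \cite{ChanGaddisWonZhang2025}.

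\textbf{Calabi--Yau property.} Let \(\mu_A\) be the Nakayama automorphism, which exists by Theorem~\ref{teo:UqB2-skew-CY}. I would invoke the result of \cite{ChanGaddisWonZhang2025} that \(\mu_A\) fixes \(Z(A)\) pointwise, so \(\mu_A\in\operatorname{Oz}(A)\). If that statement has not been imported, I would verify it by hand from the homological identity for \(\mu_A\). Since the ozone group is trivial for \(\ell\) odd, this gives \(\mu_A=\operatorname{id}\). In the connected graded domain setting, the Nakayama automorphism being the identity is exactly the Calabi--Yau condition recalled in the Preliminaries.

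\textbf{Gorenstein center.} I would cite the theorem in \cite{ChanGaddisWonZhang2025} stating that, for a noetherian PI AS-regular algebra, \(A\) is Calabi--Yau if and only if \(Z(A)\) is Gorenstein. Alternatively, the Gorenstein property follows from \(\operatorname{Oz}(A)\) being trivial, or from the invariant-theory description \(Z(A)=A^{\operatorname{Oz}(A)}\) together with a Watanabe-type criterion. Applying it with the Calabi--Yau property from the previous step yields that \(Z(A)\) is Gorenstein.

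\textbf{Main obstacle.} The hard part is matching exact hypotheses: some results in \cite{ChanGaddisWonZhang2025} assume the algebra is generated in degree \(1\) or has a specific grading. Here \(A\) is generated by \(e_1,e_2\) in degree \(1\), which should suffice, but the comparison with the cited hypotheses must be checked explicitly.
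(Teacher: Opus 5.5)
Your argument is correct and is essentially the paper's proof. The paper checks that $U_q^{+}(B_2)$ is PI, Artin--Schelter regular and noetherian, uses $\operatorname{Oz}(U_q^{+}(B_2))=\{1\}$ for $\ell$ odd, and quotes \cite[Theorem~1.11]{ChanGaddisWonZhang2025} for both conclusions. Your step $\mu_A\in\operatorname{Oz}(A)$ holds for any skew Calabi--Yau algebra, since $z\otimes1-1\otimes z$ is central in $A^e$ and annihilates $A$; it simply unpacks the Calabi--Yau half of that theorem.

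Only the implication ``Calabi--Yau $\Rightarrow$ $Z(A)$ Gorenstein'' is needed, and two of your side remarks are wrong and should be dropped:
\begin{enumerate}[\rm (i)]
\item The ``if and only if'' you attribute to \cite{ChanGaddisWonZhang2025} is false. The algebra $\K\langle x,y\rangle/(xy+yx)$ is noetherian PI AS-regular with regular center $\K[x^2,y^2]$. Yet its Nakayama automorphism is $x\mapsto -x$, $y\mapsto -y$, so it is not Calabi--Yau.
\item The route via $Z(A)=A^{\operatorname{Oz}(A)}$ cannot work here. A trivial ozone group gives $A^{\operatorname{Oz}(A)}=A$, and $A\neq Z(A)$.
\end{enumerate}
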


\begin{proof}
By Proposition~\ref{Propi}, \(U_q^{+}(B_2)\) is a PI algebra; by
Theorem~\ref{AS-regular-UqB2}, it is Artin--Schelter regular; and by
Theorem~\ref{teo:UqB2-Auslander-CM}, it is strongly Noetherian, hence
Noetherian. Moreover, since \(\ell\) is odd,
Theorem~\ref{ozono-UqB2} implies that
$$
\operatorname{Oz}(U_q^{+}(B_2))=\{1\}.
$$
Therefore, by \cite[Theorem~1.11]{ChanGaddisWonZhang2025},
\(U_q^{+}(B_2)\) is Calabi--Yau and, consequently,
\(Z(U_q^{+}(B_2))\) is Gorenstein.
\end{proof}
\begin{proposition}
\label{prop:normales-ozono-UqB2}
Let \(N^*\) be the multiplicative monoid of nonzero normal elements of
\(U_q^{+}(B_2)\), and let \(\sim\) be the equivalence relation defined by
$$
a\sim b
\quad\Longleftrightarrow\quad
z_1a=z_2b
$$
for some
\(z_1,z_2\in Z(U_q^{+}(B_2))\setminus\{0\}\). Then
$$
N^*/{\sim}
\cong
\operatorname{Oz}(U_q^{+}(B_2))
\cong
\mu_{\gcd(\ell,2)}.
$$
In particular, if \(\ell\) is odd, every normal element of
\(U_q^{+}(B_2)\) is central; if \(\ell\) is even, the class of \(e_1^{\ell/2}\) generates
\(N^*/{\sim}\).
\end{proposition}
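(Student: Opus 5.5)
The plan is to deduce the isomorphism from the general correspondence of Chan, Gaddis, Won and Zhang between normal elements and the ozone group, and then combine it with Theorem~\ref{ozono-UqB2} and Proposition~\ref{prop:normal-explicito}.

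First we place \(A=U_q^{+}(B_2)\) in their framework. By Proposition~\ref{Propi}, Theorem~\ref{AS-regular-UqB2} and Theorem~\ref{teo:UqB2-Auslander-CM}, \(A\) is a Noetherian PI Artin--Schelter regular domain. For a nonzero normal element \(a\), the map \(\eta_a(x)=a^{-1}xa\) is an automorphism of \(A\). Since every central element commutes with \(a\), \(\eta_a\) fixes \(Z(A)\) pointwise. Hence \(\eta_a\in\operatorname{Oz}(A)\), and this gives a monoid map \(\Phi\colon N^*\to\operatorname{Oz}(A)\) with \(\eta_{ab}=\eta_b\eta_a\). Because \(\operatorname{Oz}(A)\) is abelian (for instance, it is \(\mu_{\gcd(\ell,2)}\) by Theorem~\ref{ozono-UqB2}), \(\Phi\) is a monoid homomorphism.

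Next we show that \(\Phi\) factors through \(\sim\) and is injective on classes. If \(c_1a=c_2b\) with \(c_1,c_2\) nonzero central, then \(\eta_{c_1a}=\eta_a\) and \(\eta_{c_2b}=\eta_b\), so \(\eta_a=\eta_b\). Conversely, suppose \(\eta_a=\eta_b\). In the division ring of fractions \(Q\), the element \(ab^{-1}\) commutes with all of \(A\), so it lies in \(Z(Q)\). As recalled in the proof of Corollary~\ref{cor:rango-ozono-UqB2}, \(Z(Q)=Q(Z(A))\). We can therefore write \(ab^{-1}=c_2c_1^{-1}\) with \(c_i\in Z(A)\setminus\{0\}\), which gives \(c_1a=c_2b\). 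This also shows that \(\sim\) is a congruence, so \(N^*/{\sim}\) is a monoid and \(\Phi\) induces an injective map on it.

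For surjectivity we use the explicit computation. The group \(\operatorname{Oz}(A)\) is \(\{1\}\) or \(\{1,\psi_{-1}\}\). The identity is \(\eta_1\). When \(\ell\) is even, Proposition~\ref{prop:normal-explicito} gives \(\eta_{e_1^{\ell/2}}=\psi_{-1}\). So \(N^*/{\sim}\cong\operatorname{Oz}(A)\cong\mu_{\gcd(\ell,2)}\), and since the target is a group, so is the quotient. Alternatively, one could cite \cite[Lemma~1.10 / Theorem~1.11]{ChanGaddisWonZhang2025} directly for the isomorphism. The explicit route is self-contained, however, and also yields the generator \(e_1^{\ell/2}\).

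For the \emph{in particular}: if \(\ell\) is odd, every normal \(a\) has \(\eta_a=\operatorname{id}\), so \(xa=ax\) for all \(x\) and \(a\) is central. If \(\ell\) is even, the class of \(e_1^{\ell/2}\) maps to the generator \(\psi_{-1}\).

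The main obstacle is the injectivity step, which needs \(Z(Q)=Q(Z(A))\). I would justify it as in Corollary~\ref{cor:rango-ozono-UqB2}: by Posner's theorem, or because \(A\) is finite over its center, so every central fraction can be written with a central denominator.
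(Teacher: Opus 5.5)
Your proof is correct, but it is organized differently from the paper's.

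The paper does not argue about the map $a\mapsto\eta_a$ itself. It quotes from the proof of \cite[Lemma~1.10]{ChanGaddisWonZhang2025} that this map induces $N^*/{\sim}\cong\operatorname{Oz}(U_q^{+}(B_2))$. Surjectivity there comes from the general result \cite[Lemma~1.9]{ChanGaddisWonZhang2025}, a Skolem--Noether type statement that produces a normal element for an arbitrary ozone automorphism. The paper then uses Theorem~\ref{ozono-UqB2} only to identify the group, \cite[Lemma~1.10]{ChanGaddisWonZhang2025} for the odd case, and Proposition~\ref{prop:normal-explicito} only to name a generator.

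You instead verify injectivity on classes by hand, through $ab^{-1}\in Z(Q)=Q(Z(U_q^{+}(B_2)))$. You also replace the general surjectivity lemma with an explicit check. Theorem~\ref{ozono-UqB2} leaves at most the two candidates $\operatorname{id}=\eta_1$ and $\psi_{-1}=\eta_{e_1^{\ell/2}}$, and both are visibly of the form $\eta_a$. Your odd case then needs nothing beyond $\eta_a\in\operatorname{Oz}=\{1\}$.

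Your route is more self-contained. Its only external input is $Z(Q)=Q(Z)$, which you justify correctly from module-finiteness over $\K[e_1^\ell,e_2^\ell,e_3^\ell,z]$. The paper's route is more general: surjectivity does not depend on knowing $\operatorname{Oz}$ in advance, nor on having found, case by case, a normal element realizing each ozone automorphism.
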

\begin{proof}
In the proof of \cite[Lemma~1.10]{ChanGaddisWonZhang2025}, the authors
consider the surjective map
$$
N^*\longrightarrow\operatorname{Oz}(U_q^{+}(B_2)),
\qquad
a\longmapsto\eta_a,\quad \eta_a(x)=a^{-1}xa,
$$
whose surjectivity follows from
\cite[Lemma~1.9]{ChanGaddisWonZhang2025}. Moreover, for the equivalence
relation \(a\sim b\) whenever \(z_1a=z_2b\) for some
\(z_1,z_2\in Z(U_q^{+}(B_2))\setminus\{0\}\), one has
$$
N^*/{\sim}\cong\operatorname{Oz}(U_q^{+}(B_2)).
$$
By Theorem~\ref{ozono-UqB2},
$$
N^*/{\sim}\cong\operatorname{Oz}(U_q^{+}(B_2))
\cong\mu_{\gcd(\ell,2)}.
$$
If \(\ell\) is odd, \cite[Lemma~1.10]{ChanGaddisWonZhang2025}
implies that every normal element of \(U_q^{+}(B_2)\) is central; If \(\ell\) is even, Proposition
\ref{prop:normal-explicito} shows that \(e_1^{\ell/2}\) is normal and
noncentral with \(\eta_{e_1^{\ell/2}}=\psi_{-1}\), which generates
\(\operatorname{Oz}(U_q^{+}(B_2))\); hence its class generates
\(N^*/{\sim}\).
\end{proof}

\begin{question*}
A closely related generalization of \(U_q^{+}(B_2)\) is the algebra
\(U_{r,s}^{+}(B_2)\). When \(r\) and \(s\) are roots of unity, its PI
property, PI degree, and simple modules have recently been studied
\cite{MukherjeePandey2025}, and distinguished normal elements also arise
in this analysis. These features suggest studying its ozone group.
\end{question*}

\subsection*{Future work}
A natural direction for future work is to extend the study of ozone groups to other families of quantum algebras at roots of unity. In recent years,
Sanu Bera, Snehashis Mukherjee, and Sugata Mandal, together with other
collaborators, have investigated several such families, studying
properties such as the PI condition, PI degree, the existence of central
and normal elements, and the structure of their simple modules. These
families include the two-parameter quantum Heisenberg algebras
\(\mathcal H_{p,q}\), quantized Weyl algebras, the bialgebra \(M(p,q)\),
and the Dipper--Donkin and reflection equation quantum matrix algebras
\cite{BeraMandalNandy2024, BeraMukherjee2024DipperDonkin,Mukherjee2026Bialgebra,  MukherjeeBera2025}. These results provide part of the structural information that naturally
arises in the study of ozone groups, and it would therefore be interesting
to determine this invariant for those families for which it is not yet
known.

\section*{Acknowledgments}

We thank the members of the Research Seminar on Noncommutative Algebra for
the discussions, comments, and suggestions that contributed to the development
of this work. Special thanks are due to Professor Héctor Julio Suárez Suárez
for his active participation, guidance, and continued support throughout this
research.

\bibliographystyle{plain}
\bibliography{biblio}
\bigskip
\noindent
James Yair Gómez Lozano.:\\
Grupo de Álgebra y Análisis,\\
Universidad Pedagógica y Tecnológica de Colombia,\\
Tunja, Colombia\\
james.gomez@uptc.edu.co\\

Helbert Javier Venegas Ramírez.:\\
Universidad Militar Nueva Granada, Bogot\'a, Colombia,\\
Bogotá, Colombia\\
Helbert.venegas@unimilitar.edu.co

\end{document}